\newtheorem{theorem}{Theorem}[section]
\newtheorem{lemma}[theorem]{Lemma}
\newtheorem{prop}[theorem]{Proposition}
\newtheorem{conjecture}[theorem]{Conjecture}
\newtheorem{cor}[theorem]{Corollary}
\newtheorem{claim}[theorem]{Claim}
\newcommand*{\myproofname}{Proof of claim}
\newenvironment{claimproof}[1][\myproofname]{\begin{proof}[#1]}{\end{proof}}
\newcommand{\cB}{\mathcal{B}}
\newcommand{\cF}{\mathcal{F}}
\newcommand{\cH}{\mathcal{H}}
\newcommand{\cI}{\mathcal{I}}
\newcommand{\cJ}{\mathcal{J}}
\newcommand{\cT}{\mathcal{T}}
\newcommand{\cX}{\mathcal{X}}
\newcommand{\cV}{\mathcal{V}}
\newcommand{\bc}{\mathbf{c}}
\newcommand{\bi}{\mathbf{i}}
\DeclareMathOperator{\am}{am}
\DeclareMathOperator{\gm}{gm}
\title{Geometric vs Algebraic Nullity for Hyperpaths}
\author{Joshua Cooper
  \and
  Grant Fickes
}
\newcommand{\Addresses}{{
  \bigskip
  \footnotesize

\noindent  Joshua Cooper, \textsc{Department of Mathematics, University of South Carolina,
    Columbia, SC 29208 USA}\par\nopagebreak
  \textit{E-mail address}, Joshua Cooper: \texttt{cooper@math.sc.edu}

  \medskip

\noindent  Grant Fickes, \textsc{Department of Mathematics, University of South Carolina,
    Columbia, SC 29208 USA}\par\nopagebreak
  \textit{E-mail address}, Grant Fickes: \texttt{gfickes@email.sc.edu}

}}
\date{\today}
\begin{document}

\maketitle

\begin{abstract}
    We consider the question of how the eigenvarieties of a hypergraph relate to the algebraic multiplicities of their corresponding eigenvalues.  Specifically, we (1) fully describe the irreducible components of the zero-eigenvariety of a loose $3$-hyperpath (its ``nullvariety''), (2) use recent results of Bao-Fan-Wang-Zhu to compute the corresponding algebraic multiplicity of zero (its ``nullity''), and then (3) for this special class of hypergraphs, verify a conjecture of Hu-Ye about the relationship between the geometric (multi-)dimension of the nullvariety and the nullity. 
\end{abstract}

\section{Introduction}

We begin with two questions:
\begin{enumerate}
    \item What is the combinatorial meaning of the multiplicity of the zero eigenvalue of a (hyper)graph?
    \item What is the relationship between the various notions of ``multiplicity'' for an eigenvalue?
\end{enumerate}
One may combine these two questions by asking, ``What is the combinatorial meaning of {\it each notion} of the multiplicity of the zero eigenvalue of (hyper)graphs?''  For the Laplacian matrix $L(G) = D(G) - A(G)$ of a graph, in the 1970s, Fiedler showed that the multiplicity -- in both the algebraic and geometric senses -- of the zero eigenvalue is equal to the number of components of $G$.  Thus it is natural to ask this same question about the seemingly simpler adjacency matrix $A(G)$, and indeed considerable attention has been given to Question 1 (e.g., \cite{Cvetkovic72,Fiorini05,Gutman11,Sciriha07,Wang20}).  Because $A(G)$ is real symmetric and therefore diagonalizable, the answer to Question 2 is simple for a graph, however: they agree.

In contrast, these questions are nearly untouched for hypergraphs.  The first question has been investigated for some special graphs -- for example, \cite{Bao20} implicitly provides an algorithm for computing the algebraic multiplicity of zero as an eigenvalue of a hyperpath.  In a related vein, \cite{Clark18} analyzes which eigenvectors corresponding to the zero eigenvalue of a subgraph of $G$ are also such ``null eigenvectors'' for $G$.  The second question is also almost entirely unexplored for hypergraphs, and Sturmfels observed (see \cite{Hu16}) that the relatively straightforward linear eigenspaces of matrices become complicated ``eigenvarieties'' when one passes to adjacency tensors/hypermatrices to study hypergraphs.  Hu and Ye \cite{Hu16} take up this matter in earnest and pose a conjecture about the relationship between the (multi-)dimension of such varieties and their multiplicities as roots of a hypermatrix's characteristic polynomial; these are natural choices for analogizing ``geometric'' and ``algebraic'' multiplicity, respectively, and the conjecture is an attempt to generalize the fact that the geometric multiplicity of a matrix eigenvalue is bounded above by its algebraic multiplicity.  Another notable contribution \cite{Fan19} by Fan-Bao-Huang investigated properties of the eigenvariety associated with the spectral radius of a hypergraph (and, more generally, certain hypermatrices/tensors).

The aforementioned Hu-Ye Conjecture can be stated as follows; definitions follow below.  Let $\am(\lambda)$ be the algebraic multiplicity of $\lambda$ as an eigenvalue of the hypermatrix $M$.  Let $V_\lambda^1,\ldots,V_\lambda^\kappa$ denote the irreducible components of $V_\lambda$, the eigenvariety correponding to $\lambda$.

\begin{conjecture}[\cite{Hu16}] \label{conj:HuYe} For any order-$k$ hypermatrix $M$, define
$$
\gm(\lambda) := \sum_{j=1}^\kappa \dim(V_\lambda^j) (k-1)^{\dim(V_\lambda^j)-1} 
$$
Then $\gm(\lambda) \leq \am(\lambda)$.
\end{conjecture}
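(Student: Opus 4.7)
The plan is to prove the Hu-Ye inequality via a generic deformation argument that generalizes the standard proof of the geometric-$\leq$-algebraic multiplicity inequality for matrices. Observe first that for $k = 2$ the formula collapses to $\gm(\lambda) = \sum_j \dim(V_\lambda^j) = \dim(V_\lambda)$, the classical geometric multiplicity, so the conjecture reduces to the well-known matrix fact. This already suggests the strategy: perturb $M$ and track how each positive-dimensional irreducible component of $V_\lambda$ breaks into isolated nearby eigenvectors, each of which contributes to $\am(\lambda)$ under continuity.

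First I would set up the one-parameter family $M_t = M + tE$ for a generic order-$k$ hypermatrix $E$. Since the characteristic polynomial $\phi_{M_t}(\lambda)$ varies algebraically in $t$, upper semi-continuity of root multiplicity gives $\am_{M}(\lambda_0) \geq \#\{\text{roots of } \phi_{M_t} \text{ in a small disk around } \lambda_0\}$, counted with multiplicity, for generic small $t$. For such $t$ one expects the eigenvariety $V_{\lambda_0 + \varepsilon}(M_t)$ with $|\varepsilon|$ small to be zero-dimensional, with its points concentrated near the positive-dimensional components of the original $V_{\lambda_0}(M)$.

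The heart of the argument is a local count: for each irreducible component $V_{\lambda_0}^j$ of affine dimension $d_j$, show that at least $d_j(k-1)^{d_j-1}$ isolated eigenvectors of $M_t$ (with multiplicity) emerge in a neighborhood, with eigenvalues tending to $\lambda_0$ as $t \to 0$. The heuristic motivating this count is that writing the deformed equation $Mx^{k-1} + tEx^{k-1} = (\lambda_0 + t\mu)x^{[k-1]}$ and dividing by $t$ along $V_{\lambda_0}^j$ (where the zeroth-order part vanishes) yields an induced eigenvalue problem $Ex^{k-1} = \mu x^{[k-1]}$ restricted to the component; when the component is a linear subspace of affine dimension $d_j$, this is literally the eigenvalue problem for an order-$k$, dimension-$d_j$ sub-hypermatrix of $E$, which generically has exactly $d_j(k-1)^{d_j-1}$ eigenvector-eigenvalue pairs --- matching the total generic count for such a hypermatrix. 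Summing over components would then deliver $\am(\lambda_0) \geq \sum_j d_j(k-1)^{d_j-1} = \gm(\lambda_0)$.

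The main obstacle is making this local count rigorous for components that are not linear subspaces of $\mathbb{P}^{n-1}$ or that carry non-reduced scheme structure. One must replace the naive restriction with an excess-intersection computation in the style of Fulton's intersection theory: the contribution of $V_{\lambda_0}^j$ to the intersection of the $n$ degree-$(k-1)$ hypersurfaces cutting out $V_\lambda(M_t)$ is a Segre-class integral over its normal cone, and one must show this evaluates to at least $d_j(k-1)^{d_j-1}$. The conspicuous absence of any $\deg(V_{\lambda_0}^j)$ factor from the conjectured formula hints that these Segre contributions simplify dramatically for eigenvarieties, perhaps due to the structured form of the defining hypersurfaces (each being a difference between a multilinear form and a monomial $\lambda x_i^{k-1}$). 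A secondary subtlety is a Puiseux-series analysis to guarantee that eigenvalues of $M_t$ near each component truly converge to $\lambda_0$ rather than escaping or being absorbed by another component of $V_{\lambda_0}$. The paper's special-case verification for loose $3$-hyperpaths gives concrete evidence that the excess contributions behave as predicted, and the explicit component-by-component matching achieved there should guide the extraction of the general Segre-class formula.
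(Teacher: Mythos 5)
The statement you are proving is stated in the paper as an open conjecture of Hu--Ye: the paper never proves it in general, and only \emph{verifies} it for the zero eigenvalue of the loose $3$-uniform hyperpaths $P_n^3$, by explicitly determining the irreducible components of the nullvariety (the Fibonacci-subset classification of Theorem \ref{Thm:IrredComps}), packaging their dimensions into a generating function, computing $\am(0)=D_{n,3}$ from the Bao--Fan--Wang--Zhu factorization, and comparing the two quantities term by term. So a correct general proof would go far beyond the paper; but your proposal is not that. The central step --- that under a generic perturbation $M_t = M + tE$ each irreducible component $V_{\lambda_0}^j$ of dimension $d_j$ contributes at least $d_j(k-1)^{d_j-1}$ roots of $\phi_{M_t}$ converging to $\lambda_0$ --- is only motivated heuristically, and you yourself defer it to an unperformed excess-intersection/Segre-class computation. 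As written, that is a research program, not a proof: nothing in the proposal shows the normal-cone contribution of a component is bounded below by $d_j(k-1)^{d_j-1}$, and the absence of any $\deg(V_{\lambda_0}^j)$ or multiplicity factors in the conjectured formula is exactly the point at issue, not evidence that the contributions ``simplify dramatically.''

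There are also concrete technical failures in the intermediate steps. The reduction of the first-order equation to ``the eigenvalue problem for an order-$k$, dimension-$d_j$ sub-hypermatrix of $E$'' is only literal when the component is a \emph{coordinate} linear subspace, because the map $x \mapsto x^{[k-1]}$ does not restrict to a map of the same shape on a general linear (let alone nonlinear) subvariety; the components found in this very paper (e.g.\ $\cV(x_1x_2+x_4x_5,x_3)$ for $P_2^3$) are already nonlinear, so the base case of your heuristic does not cover even the motivating examples. Further, you need, and do not supply, (i) an argument that isolated eigenvectors of $M_t$ near a component each force a root of $\phi_{M_t}$ near $\lambda_0$ with the right multiplicity (the characteristic polynomial is a resultant, and the dictionary between local solution multiplicities of the eigen-system and root multiplicities of $\phi$ is nontrivial), (ii) a guarantee that contributions attributed to distinct components are not the same nearby eigenpairs counted twice, and (iii) control on eigenvectors escaping to infinity or into the base locus as $t \to 0$, which your Puiseux remark flags but does not resolve. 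Each of these is a genuine gap; absent them the inequality $\am(\lambda_0) \ge \gm(\lambda_0)$ is not established, which is consistent with the fact that the paper treats the statement as a conjecture and only confirms it in the special case it can compute explicitly.
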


Here we verify this for the zero eigenvalue of a simple class of $3$-uniform hypergraphs -- sometimes called ``loose paths'' or ``linear hyperpaths'' -- by obtaining an explicit description of the irreducible components of their nullvarieties, using this to obtain a generating function that encodes said irreducible components' dimensions, using results from \cite{Bao20} to obtain an explicit expression for the multiplicity of zero as a root of their characteristic polynomials, and comparing the resulting quantities to confirm the conjecture in this special case.  \\

We briefly define the multilinear algebra and spectral hypergraph theory terminology and notation used throughout the paper.  More detailed information and references can be found in \cite{Cooper12,Fan19}.  An order-$k$ {\em hypermatrix}\footnote{Variously known as a ``tensor'' in some literature.} $M$ over a ring $R$ is a $k$-dimensional array of values $M_{i_1\cdots i_k} \in R$ (usually $R = \mathbb{C}$), which we often identify with the function $M : (i_1,\ldots,i_k) \mapsto M_{i_1\cdots i_k}$. A hypermatrix is {\em cubical} if the $i_j$, $j = 1,\ldots,k$, all belong to the same index set $\cI$, in which case we say that its {\em dimension} is $|\cI|$, and a cubical hypermatrix is {\em symmetric} if, for every permutation $\sigma$ of $\cI$ and $\bi = (i_1,\ldots,i_k) \in \cI^k$, $M_\bi = M_{\sigma(\bi)}$, where $\sigma(\bi) = (\sigma(i_1),\ldots,\sigma(i_k))$.  An order-$k$ cubical hypermatrix $M$ of dimension $n$ over $R$ gives rise to a homogeneous $k$-form $Mx^k$, where $x = (x_1,\ldots,x_n)$, given by $\sum_{\bi \in [n]^k} M_\bi x^\bi$, where $x^\bi$ denotes $\prod_{j=1}^k x_{i_j}$ if $\bi = (i_1,\ldots,i_k)$.   The {\em symmetric hyperdeterminant} $\det(M)$ of a symmetric hypermatrix $M$ over $R = \mathbb{C}[\{x_\bi\}_{\bi \in [n]^k}]$ is the unique monic irreducible polynomial over $R$ which vanishes if and only if $\nabla (Mx^k) = \mathbf{0}$ for some nonzero vector $x \in \mathbb{C}^n$.  The {\em identity} hypermatrix $I$ of rank $k$ and order $n$ is the function so that $I(i_1,\ldots,i_k) = 1$ if $i_1 = \cdots = i_k$ and $0$ otherwise.  Write $\lambda M$ for the hypermatrix whose $\bi$ entry is $\lambda M_{\bi}$ for each valid multi-index $\bi$.  Then the {\em characteristic polynomial} of $M$ is $\phi_M(\lambda) := \det(\lambda I - M) \in \mathbb{C}[\lambda]$.  The (homogeneous) {\em spectrum} of $M$ is the multiset of roots of $\phi_M(\lambda)$; the elements $\lambda$ of the adjacency spectrum of $M$ are referred to as {\em eigenvalues} of $M$, and any nonzero $x$ so that $\nabla[(M-\lambda I)x^k] = 0$ is a {\em corresponding eigenvector}.  The set of all eigenvectors corresponding to an eigenvalue $\lambda$ of a hypermatrix $M$ of dimension $n$ is its $\lambda$-eigenvariety $\cV_\lambda$.  Then $\cV_\lambda$ is an affine algebraic variety in $\mathbb{C}^n$; indeed, since the equations defining eigenvectors are homogeneous, $\cV_\lambda$ can also be viewed as a projective variety, although we adhere to the affine perspective presently.  The multiplicity of $\lambda$ as a root of $\phi_M(\lambda)$ is its {\em algebraic muliplicity}, and the dimension of the variety $\cV_\lambda$ is its {\em geometric multiplicity}.  {\bf Since the $0$-eigenvariety of a matrix $M$ -- i.e., a hypermatrix of order $k=2$ -- is its nullspace, we refer to the $0$-eigenvariety as the {\em nullvariety} of $M$.  We also refer to the algebraic multiplicity of $0$ as the {\em nullity} of $M$.}  

A (uniform) {\em hypergraph} $\cH$ of rank $k$ is a pair $(V,E)$, where $E \subset \binom{V}{k}$. The {\em adjacency hypermatrix} of a hypergraph $\cH$ is the symmetric hypermatrix $A(\cH) : V^k \rightarrow \mathbb{C}$ so that $A(\cH)_{v_1 \cdots v_k}$ is $1/(k-1)!$ if $\{v_1,\ldots,v_k\} \in E(\cH)$ and $0$ otherwise. 
The $k$-form $p(x_1,\ldots,x_k)=A(\cH)x^k$ is sometimes known as the {\em Lagrangian polynomial} of $\cH$; the coordinate $\partial p/\partial x_i$ of $\nabla A(\cH)x^k$ is ($k$ times) the Langrangian polynomial of the {\em link} of vertex $v_i$ in $\cH$, i.e., the hypergraph whose edges are $\{e \setminus \{x_i\} | v_i \in e \in E(\cH)\}$.  
We will often abuse notation slightly and refer to the multilinear algebraic properties of $A(\cH)$ by describing them as properties of $\cH$ instead.  For example, the (adjacency) {\em spectrum} of a hypergraph $\cH$ is the spectrum of $A(\cH)$, the {\em nullvariety} of $\cH$ is the nullvariety of $A(\cH)$, and $\phi_\cH(\lambda) := \phi_{A(\cH)}(\lambda)$.  A loose hyperpath $P_n^k$ is the $k$-uniform hypergraph on $n$ edges $\{e_1,\ldots,e_n\}$ so that, for $i \neq j$, $|e_i \cap e_j|$ is $1$ if $|i-j|=1$ and $0$ otherwise.  We label the vertex set $V(P_n^k)$ with $\{v_1,\ldots, v_{(k-1)n+1}\}$ so that $e_j = \{v_{(k-1)(j-1)+1},\ldots,v_{(k-1)j+1}\}$ for $j \in [n]$.

Throughout, we also write $\cV(S)$ for the affine variety over $\mathbb{C}$ defined as the zero locus of the set of polynomials $S$, and $\cV(p)$ for $\cV(\{p\})$.  We write $\langle S \rangle$ for the ideal generated by a set of polynomials $S$, and call $S$ {\em irredundant} if $\langle S' \rangle \neq \langle S \rangle$ when $S' \subsetneq S$ is any proper subset. Also, given $p \in \mathbb{C}[x_1,\ldots,x_m]$ and a vector $\mathbf{c} \in \mathbb{C}^m$, we will sometimes say ``$\bc$ satisfies $p$'' if $p(\bc) = 0$.\\

In the next section, we enumerate the irreducible components of the nullvariety of $P_n^3$ and capture their count and the quantity $\gm(0)$ as a generating function.  The following section repeats this exercise, but for the nullity $\am(0)$ of $P_n^3$ -- in fact, more generally $P_n^k$ for $k \geq 3$.  The last section compares these two functions of $n$, verifying the Hu-Ye Conjecture for the zero eigenvalue of $P_n^3$.

\section{Null Variety for Rank-$3$ Loose Hyperpaths}\label{Sec:GeoMult}

We examine the ``geometric multiplicity'' of the zero eigenvalue for a hypergraph $\mathcal{H}$, or more accurately, the multiset of dimensions of irreducible components of the corresponding nullvariety. 
Our strategy will be as follows.  First, we describe the ideal whose zero locus is the nullvariety, generated by the Lagrangian polynomials of the links of all vertices.  Each of the degree-one vertices contributes a polynomial to the ideal which is a simple product of variables.  Thus, the vanishing of these monomials reduces to the vanishing of each of their constituent variables, one at a time.  Considering the set of possible vanishing monomials -- which correspond to vertices/coordinates where portions of the nullvariety are zero -- results in substantial simplification of the set of polynomials in the ideal.  Thus, taking the union of all such vanishing set possibilities gives a decomposition of the nullvariety into simpler subvarieties.  We then analyze these subvarieties to show that they are irreducible.  Next, it is necessary to identify which such irreducible subvarieties are maximal in order to obtain an irredundant list of irreducible components.  Finally, we describe the multiset of these components' dimensions by counting the number of polynomials determining them, leading to an expression for $\gm(0)$.
As a warm-up, and for completeness, we start with the one-edge and two-edge hyperpaths.

\subsection{Small Cases}
\begin{prop}\label{prop:oneedge}
The $3$-uniform hyperedge $\cH = P_1^3$ has three irreducible components of dimension $1$, and $\gm(0) = 3$. 
\end{prop}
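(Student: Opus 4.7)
The plan is to compute the nullvariety directly from the Lagrangian polynomial of the single-edge hypergraph. For $\cH = P_1^3$ on vertex set $\{v_1, v_2, v_3\}$, summing the $3!$ symmetric entries of $A(\cH)$, each weighted by $1/(k-1)! = 1/2$, produces $A(\cH)x^3 = 3x_1 x_2 x_3$. Its gradient is then, up to a common nonzero scalar, the triple of monomials $x_2 x_3$, $x_1 x_3$, $x_1 x_2$, and $\cV_0$ is the zero locus of the ideal they generate.

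The next step, following the overall strategy sketched in the section preamble, is to decompose $\cV_0$ by case analysis on which coordinates vanish. Since every pairwise product $x_i x_j$ must be zero, no two coordinates can simultaneously be nonzero, so at least two of $x_1, x_2, x_3$ must vanish. This gives the decomposition
\[
\cV_0 \;=\; \cV(x_1, x_2) \;\cup\; \cV(x_1, x_3) \;\cup\; \cV(x_2, x_3),
\]
exhibiting $\cV_0$ as the union of the three coordinate axes of $\mathbb{C}^3$. Each is an affine line, hence irreducible of dimension $1$, and no two of them coincide or contain one another, so the three axes form an irredundant list of the maximal irreducible components.

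Finally, plugging $k=3$, $\kappa=3$, and $\dim(V_0^j) = 1$ for every $j$ into the Hu--Ye formula yields
\[
\gm(0) \;=\; \sum_{j=1}^{3} \dim(V_0^j)\,(k-1)^{\dim(V_0^j)-1} \;=\; 3 \cdot 1 \cdot 2^0 \;=\; 3.
\]
There is no real obstacle here: the proposition is essentially a direct calculation, serving as a warm-up that fixes conventions (the normalization of $A(\cH)$, what ``irredundant'' means for these coordinate-subspace components, and how $\gm(0)$ is assembled from a dimension multiset) before the same template is applied to longer hyperpaths.
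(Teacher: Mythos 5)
Your proof is correct and follows essentially the same route as the paper: compute the link polynomials (equivalently the gradient of the Lagrangian), observe that the defining ideal is $\langle x_1x_2, x_1x_3, x_2x_3\rangle$, and decompose the zero locus into the three coordinate axes. The only cosmetic difference is that you justify the decomposition by the direct observation that no two coordinates can be simultaneously nonzero, whereas the paper expands $\bigcap \cV(x_ix_j)$ via $\cV(pq)=\cV(p)\cup\cV(q)$ and the distributive law (a template it then reuses for $P_2^3$ and the general case), and you explicitly finish by evaluating the Hu--Ye sum, which the paper leaves implicit.
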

\begin{proof}
Let the vertices of $\cH$ be $v_1, v_2, v_3$. Given a null vector $x$, if the adjacency tensor of $\cH$ is $A = A(\cH)$, then 
the $i$-th component of $Ax^{2}$ is given by $\sum_{\{i,j,k\}\in E(\cH)} x_jx_k = x_1 x_2 x_3/x_i$.
Since $x$ is a null vector, we have $x_1x_2 = x_1x_3 = x_2x_3 = 0$, and we consider the variety $V_0 \subset \mathbb{C}^3$ in three-dimensional affine space defined by these equations. If $p,q$ are polynomials, then $\cV(p,q) = \cV(p)\cap \cV(q)$ and $\cV(pq) = \cV(p)\cup \cV(q)$. Therefore, we have the following. 
\begin{align*}
\cV(x_1x_2,x_1x_3,x_2x_3) &= \cV(x_1x_2) \cap \cV(x_1x_3) \cap \cV(x_2x_3) \\
&= \left[\cV(x_1)\cup\cV(x_2)\right] \cap \left[\cV(x_1)\cup\cV(x_3)\right] \cap \left[\cV(x_2)\cup\cV(x_3)\right]
\end{align*}
This is equal to the union over all choices of $\cV(x_i)\cap\cV(x_j)\cap\cV(x_k) = \cV(x_i,x_j,x_k)$ where $i\in\{1,2\}$, $j\in\{1,3\}$, and $k\in\{2,3\}$. Thus, maximal subvarieties of $V_0$ correspond to minimal sets $\{i,j,k\}$ given these conditions, i.e.,
$$
\left[\cV(x_1)\cup\cV(x_2)\right] \cap \left[\cV(x_1)\cup\cV(x_3)\right] \cap \left[\cV(x_2)\cup\cV(x_3)\right] = \cV(x_1,x_2) \cup \cV(x_1,x_3) \cup \cV(x_2,x_3).
$$
Since $\cV(x_i,x_j)$ is the $x_k$-axis, $V_0$ is the union of three lines.
\end{proof}

\begin{prop}\label{prop:twoedge}
If $\cH = P_2^3$, then $V_0$ has one component of dimension $1$ and another of dimension $3$, so that $\gm(0) = 13$. 
\end{prop}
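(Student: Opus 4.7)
The plan is to follow the template of Proposition \ref{prop:oneedge}, modified to handle the single internal vertex $v_3$ of degree two. Labelling the vertices so that $e_1 = \{v_1, v_2, v_3\}$ and $e_2 = \{v_3, v_4, v_5\}$, the five coordinates of $A(\cH)x^2$ produce the defining equations of $V_0$:
\[
x_2 x_3 = x_1 x_3 = x_3 x_5 = x_3 x_4 = 0, \qquad x_1 x_2 + x_4 x_5 = 0.
\]
The four degree-one vertices contribute monomials, while $v_3$ contributes the rank-$4$ quadratic form $x_1 x_2 + x_4 x_5$.

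Next I would apply $\cV(pq) = \cV(p) \cup \cV(q)$ to each of the four monomial equations and distribute the resulting intersection over unions, obtaining
\[
V_0 = \bigcup_T \cV(T) \cap \cV(x_1 x_2 + x_4 x_5),
\]
where each $T$ is assembled by choosing either $x_3$ or $x_i$ (for $i \in \{1,2,4,5\}$) from the factorization of the product $x_3 x_i$. The maximal subvarieties correspond to \emph{minimal} choices of $T$: if every selection is $x_3$ then $T = \{x_3\}$; if none is, then $T$ is forced to be $\{x_1, x_2, x_4, x_5\}$; every other $T$ is a proper superset of one of these. The two candidate components are therefore
\[
W_1 = \cV(x_3,\; x_1 x_2 + x_4 x_5), \qquad W_2 = \cV(x_1, x_2, x_4, x_5).
\]

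The remaining task is to verify that $W_1$ and $W_2$ are genuinely irreducible and that neither contains the other. $W_2$ is visibly the $x_3$-axis, irreducible of dimension $1$. For $W_1$, the one non-routine point, I would argue that $x_1 x_2 + x_4 x_5$ is irreducible in $\mathbb{C}[x_1, x_2, x_4, x_5]$: any nontrivial factorization must be into two linear forms, which would force the quadratic form to have rank at most $2$, contradicting its actual rank of $4$. Hence $W_1$ is an irreducible hypersurface inside the hyperplane $\{x_3 = 0\} \cong \mathbb{C}^4$, so $\dim W_1 = 3$; and the two components are incomparable since $W_2$ contains points with $x_3 \neq 0$, while $W_1$ contains, e.g., $(1, 0, 0, 0, 0)$.

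Plugging the dimensions $1$ and $3$ into the Hu--Ye formula with $k = 3$ gives
\[
\gm(0) = 1 \cdot 2^{0} + 3 \cdot 2^{2} = 13,
\]
as claimed. The only nontrivial obstacle is the irreducibility of the rank-$4$ quadric; the rest is the same decompose-and-minimize bookkeeping as in Proposition \ref{prop:oneedge}, carried out with one extra non-monomial constraint on the side.
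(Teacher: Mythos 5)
Your proposal is correct and follows essentially the same decomposition as the paper: write down the five link equations, factor the monomial constraints into unions of coordinate hyperplanes, distribute, identify the two minimal choices $\{x_3\}$ and $\{x_1,x_2,x_4,x_5\}$, and intersect with $\cV(x_1x_2+x_4x_5)$. The one thing you add that the paper leaves implicit is the explicit rank-$4$ argument for irreducibility of the quadric $x_1x_2+x_4x_5$, which is a welcome bit of rigor.
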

\begin{proof}
Let the vertices of $\cH$ be $v_1, v_2, v_3, v_4, v_5$. Let $x$ be a null vector. The equations defining $V_0 \subset \mathbb{C}^5$ are $x_1x_3 = x_2x_3 = x_1x_2 + x_4x_5 = x_3x_4 = x_3x_5$. Decompose this system as follows: 
$$
V_0 = \cV(x_1x_3,x_2x_3,x_3x_4,x_3x_5) \cap \cV(x_1x_2+x_4x_5).
$$
In the first conjunct, we have intersections of unions, namely 
$$
\cV(x_1x_3,x_2x_3,x_3x_4,x_3x_5) = \left[ \cV(x_1) \cup \cV(x_3) \right] \cap \left[ \cV(x_2) \cup \cV(x_3) \right] \cap \left[ \cV(x_3) \cup \cV(x_4) \right] \cap \left[ \cV(x_3) \cup \cV(x_5) \right].
$$
Expand the expression on the right to obtain the union over all choices of $\cV(x_i)\cap\cV(x_j)\cap\cV(x_k)\cap\cV(x_\ell) = \cV(x_i,x_j,x_k,x_\ell)$ where $i\in\{1,3\}$, $j\in\{2,3\}$, $k\in\{3,4\}$, and $\ell\in\{3,5\}$. The union $\bigcup_{\{i,j,k,\ell\}} \cV(x_i,x_j,x_k,x_\ell)$ is the union over the minimal sets $\{i,j,k,\ell\}$ of this form, i.e., 
\begin{align*}
\left[ \cV(x_1) \cup \cV(x_3) \right] \cap \left[ \cV(x_2) \cup \cV(x_3) \right] & \cap \left[ \cV(x_3) \cup \cV(x_4) \right] \cap \left[ \cV(x_3) \cup \cV(x_5) \right] \\
=& \cV(x_3) \cup \cV(x_1,x_2,x_4,x_5). 
\end{align*}
The second variety has dimension four, while the first variety is the $x_3$ axis. It remains to intersect each such set with $\cV(x_1x_2 + x_3x_4)$. Note that $\cV(x_1,x_2,x_4,x_5)\subseteq \cV(x_1x_2+x_4x_5)$, so that intersection yields $\cV(x_1,x_2,x_4,x_5)$. The intersection of $\cV(x_1x_2+x_4x_5)$ and $\cV(x_3)$ gives $\cV(x_1x_2+x_4x_5,x_3)$, which is a three-dimensional variety. Thus,
$$
V_0 = \cV(x_1,x_2,x_4,x_5) \cup \cV(x_1x_2+x_4x_5,x_3),
$$
which is the union of a one-dimensional and a three-dimensional irreducible component.
\end{proof}

\subsection{General $3$-uniform case}

We now generalize the above approach to all $3$-uniform loose hyperpaths.  Define $p_k$ to be $x_{k-2}x_{k-1}+x_{k+1}x_{k+2}$ for some integer $k$. For integer $n\geq 1$, define $A'_n := \{2k+1: 1\leq k\leq n-1\}$, and let $A_n = A_n'\setminus\{3,2n-1\}$. Define $\cF_n$ be the collection of ``Fibonacci subsets'' of $A'_n$, i.e., sets containing at least one of each two consecutive elements:
$$
\cF_n = \{ S \subset A'_n : \forall k \in [n-2], (2k+1 \in S) \vee (2k+3 \in S)\}
$$
Let $S$ be any element of $\cF_n$.  We say that a set of polynomials $B \subset \{x_i : i \in [2n+1]\} \cup \{p_i : i \in A'_n\}$ is $S$-admissible if it can be obtained in the following manner.  Define $U_i$, $i = 1,2,3,4$, in the following way. 
\begin{enumerate}
\item $U_1 = \{x_i:i\in S\}$
\item $U_2 = \begin{cases}
\{x_1,x_2\} & \mbox{if }x_3\notin U_1, x_5 \in U_1 \\
\{x_1\}\mbox{ or }\{x_2\} & \mbox{if } \{x_3,x_5\}\subseteq U_1 \\
\{p_{3}\} & \mbox{if } x_{3} \in U_1, x_{5}\notin U_1
\end{cases} $
\item $U_3 = \begin{cases}
\{x_{2n},x_{2n+1}\} & \mbox{if }x_{2n-1}\notin U_1, x_{2n-3}\in U_1 \\
\{x_{2n}\}\mbox{ or }\{x_{2n+1}\} & \mbox{if } \{x_{2n-3},x_{2n-1}\}\subseteq U_1 \\
\{p_{2n-1}\} & \mbox{if } x_{2n-1} \in U_1, x_{2n-3}\notin U_1
\end{cases}$
\item $U_4 = \bigcup_{a\in A} \begin{cases}
\emptyset & \mbox{if } \{x_{a-2},x_{a+2}\} \subseteq U_1 \\
\{x_{a+1}\} & \mbox{if } x_{a-2} \in U_1 ,x_{a+2} \notin U_1 \\
\{x_{a-1}\} & \mbox{if } x_{a-2} \notin U_1 ,x_{a+2} \in U_1 \\
\{p_a\} & \mbox{if } \{x_{a-2}, x_{a+2}\} \subseteq \{x_j: j\in A_n'\}\setminus U_1 \\
\end{cases}$
\end{enumerate}
Note that the only choices that do not depend only on $S$ arise from cases of $U_2$ and $U_3$. If we let $\mathcal{U}_i$ denote the collection of all allowable $U_i$, $i=2,3$, then $\cT_S = \{U_1\cup U_2 \cup U_3 \cup U_4 : U_2 \in \mathcal{U}_2, U_3 \in \mathcal{U}_3 \}$ is the collection of $S$-admissible sets. We also remark that for each $B\in\cT_S$, $B\subseteq \mathbb{C}[x_1,\dotsc,x_{2n+1}]$. Define $I_B$ as the ideal in $\mathbb{C}[x_1,\dotsc,x_{2n+1}]$ generated by the polynomials in $B$. Furthermore, let $\mathcal{I}_n$ denote the collection of all such ideals generated by $S$-admissible sets in $\mathcal{F}_n$, i.e., 
$$
\mathcal{I}_n = \{I_B : B \in \cT_S \text{ for some } S\in\mathcal{F}_n \}. 
$$
Before proceeding, we note the following useful fact.

\begin{prop}[Prop. 5.20 in \cite{Milne17}] \label{prop:cartesian} If $V$ and $W$ are irreducible affine varieties over an algebraically closed field, then $V \times W$ is as well.
\end{prop}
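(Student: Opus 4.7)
The plan is the standard fiberwise irreducibility argument: I suppose for contradiction that $V \times W = Z_1 \cup Z_2$ is a union of two proper Zariski-closed subsets, and use irreducibility of $V$ together with irreducibility of each slice $\{v\} \times W$ to derive a contradiction. The idea is to pass from the hypothetical decomposition of $V \times W$ down to a decomposition of $V$, and then invoke irreducibility of $V$.

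First, I would exploit the fact that for each $v \in V$, the slice $\{v\} \times W$ is isomorphic to $W$ and hence irreducible. Since
\[
\{v\} \times W = \bigl((\{v\} \times W) \cap Z_1\bigr) \cup \bigl((\{v\} \times W) \cap Z_2\bigr)
\]
expresses this slice as a union of two closed subsets, one of them must equal the whole slice, i.e., $\{v\} \times W \subseteq Z_j$ for some $j \in \{1,2\}$. Setting $V_j := \{v \in V : \{v\} \times W \subseteq Z_j\}$, I obtain $V = V_1 \cup V_2$. To close the argument, I need that each $V_j$ is closed in $V$; for this, I would write
\[
V_j = \bigcap_{w \in W} \{v \in V : (v,w) \in Z_j\},
\]
noting that each set in the intersection is the preimage of $Z_j$ under the closed embedding $\iota_w : v \mapsto (v,w)$, hence closed in $V$.

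With both $V_j$ closed and $V = V_1 \cup V_2$, irreducibility of $V$ forces $V = V_j$ for some $j$, which yields $V \times W \subseteq Z_j$ and contradicts the assumption that $Z_j$ is proper. The main technical step I anticipate is verifying closedness of $V_j$, but this is handled cleanly by the displayed intersection. An alternative, purely algebraic route would be to show instead that the coordinate ring $k[V] \otimes_k k[W]$ is an integral domain — which relies on $k$ being algebraically closed in order to control the tensor product of the function fields at generic points — but the geometric argument above is more transparent and avoids that subtlety.
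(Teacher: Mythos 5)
The paper does not prove this proposition; it is cited verbatim as Proposition 5.20 of Milne's \emph{Algebraic Geometry} notes, so there is no in-paper argument to compare against. Your slicing proof is the standard one and is correct: the irreducibility of each slice $\{v\}\times W \cong W$, the resulting cover $V = V_1 \cup V_2$, the closedness of each $V_j = \bigcap_{w\in W} \iota_w^{-1}(Z_j)$ as an intersection of preimages under the morphisms $\iota_w$, and the final appeal to irreducibility of $V$ all go through. One point worth making explicit, since you only gesture at it in the last sentence: the geometric route does not actually dispense with algebraic closedness, it merely buries it. Algebraic closedness is what identifies the point set of the variety $V\times W$ (whose coordinate ring is $k[V]\otimes_k k[W]$) with the set-theoretic product of the $k$-points of $V$ and of $W$, via the Nullstellensatz; that identification is what your argument tacitly uses when it treats $V\times W$ as the union of the slices $\{v\}\times W$. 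Over, say, $k=\mathbb{R}$ with $V = W = \cV(x^2+1)$ one has $\mathbb{C}\otimes_{\mathbb{R}}\mathbb{C} \cong \mathbb{C}\times\mathbb{C}$, not a domain, yet there are no real points at all, so the slicing argument sees nothing. So the hypothesis is genuinely load-bearing in both proofs; the geometric one just avoids having to reason directly about function fields and generic points.
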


In fact, the way we will often use Proposition \ref{prop:cartesian} is: if $I \subset \mathbb{C}[x_1,\ldots,x_n]$ and $J \subset \mathbb{C}[y_1,\ldots,y_m]$ are prime ideals and $I',J'$ are the ideals they generate in $\mathbb{C}[x_1,\ldots,x_n,y_1,\ldots,y_m]$, respectively, then $I'+J'$ is also a prime ideal, and $\cV(I'+J') = \cV(I) \times \cV(J)$.  The following lemma establishes that the ideals in $\mathcal{I}_n$ are prime.

\begin{lemma}\label{Lem:IisPrime}
For $n \geq 3$ and each $I_B\in \mathcal{I}_n$, $I_B$ is a prime ideal in $\mathbb{C}[x_1,\dotsc,x_{2n+1}]$, and $B$ is an irredundant set of generators for it.
\end{lemma}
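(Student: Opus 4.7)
The plan is to decompose $B$ into its single-variable and its binomial generators, show the two classes use disjoint variables so that primality of $I_B$ reduces to primality of a binomial ideal, decompose that binomial ideal via Proposition~\ref{prop:cartesian} into ``chains'' of pairwise overlapping $p_a$'s, and then prove primality of each chain's ideal by induction.

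For the first reduction, write $B = B_x \sqcup B_p$, where $B_x$ collects the single-variable generators (from $U_1$ and the variable sub-cases of $U_2, U_3, U_4$) and $B_p$ collects the binomials $p_a = x_{a-2}x_{a-1} + x_{a+1}x_{a+2}$.  A direct case analysis of the $U_j$'s---using that $p_a$ is placed in $U_4$ only when $x_{a-2}, x_{a+2} \notin U_1$, and the analogous boundary cases for $U_2, U_3$---shows that the indices of the variables in $B_x$ are disjoint from the four indices $\{a-2, a-1, a+1, a+2\}$ of any $p_a \in B_p$.  Hence $\mathbb{C}[x_1, \ldots, x_{2n+1}] / \langle B_x \rangle$ is a polynomial ring in the untouched variables, and $I_B$ is prime iff $\langle B_p \rangle$ is prime in that smaller ring.

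Next, use the Fibonacci constraint on $S$ together with the $U_4$ case conditions to show that for any $p_a, p_{a'} \in B_p$ with $a < a'$ we have $a' \geq a + 4$, with $a' = a + 4$ the unique case producing a single shared variable $x_{a+2} = x_{a'-2}$, and $a' \geq a + 6$ yielding disjoint variable supports.  Thus $B_p$ splits into disjoint ``chains'' $p_{a_1}, \ldots, p_{a_s}$ linked by shared variables (with $a_{i+1} = a_i + 4$), and distinct chains involve disjoint variable sets.  Iterated application of Proposition~\ref{prop:cartesian} reduces primality of $\langle B_p \rangle$ to primality of the ideal of each chain separately.

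For a single chain, relabel the $3s + 1$ involved variables as $y_0, \ldots, y_{3s}$ so that $p_{a_i}$ becomes $y_{3i-3} y_{3i-2} + y_{3i-1} y_{3i}$, and induct on $s$.  The base case $s = 1$ is irreducibility of a single binomial.  For the inductive step, let $A := \mathbb{C}[y_0, \ldots, y_{3s-3}] / I_{s-1}$, a domain by induction, and let $\alpha \in A$ be the image of the shared variable $y_{3s-3}$; exhibit the point with $y_{3s-3} = 1$ and all other coordinates zero in $\cV(I_{s-1})$ to conclude $\alpha \neq 0$.  Then $R_s / I_s$ is isomorphic to $A' := A[y_{3s-2}, y_{3s-1}, y_{3s}] / (\alpha y_{3s-2} + y_{3s-1} y_{3s})$; show this is a domain by first verifying that $A'$ is $A$-torsion-free (a coefficient-matching argument in $A[y_{3s-2}, y_{3s-1}, y_{3s}]$ shows any relation $a f = g (\alpha y_{3s-2} + y_{3s-1} y_{3s})$ forces $a \mid g$), so that $A'$ embeds into $A' \otimes_A \mathrm{Frac}(A) \cong \mathrm{Frac}(A)[y_{3s-1}, y_{3s}]$ (obtained once $\alpha$ is invertible by solving for $y_{3s-2}$), a polynomial ring over a field and hence a domain.

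Finally, for irredundancy exhibit for each $b \in B$ a point of $\cV(B \setminus \{b\})$ at which $b$ does not vanish: if $b = x_j$, take $x_j = 1$ and all other coordinates zero; if $b = p_a$, take $x_{a-2} = x_{a-1} = 1$ and all other coordinates zero, using that $x_{a-1}$ (being even-indexed) is ``private'' to $p_a$ among $B_p$ and disjoint from $B_x$, so every other generator vanishes.  Such a point certifies $b \notin \sqrt{\langle B \setminus \{b\} \rangle} \supseteq \langle B \setminus \{b\} \rangle$, giving irredundancy.  The main obstacle is the inductive chain step: irreducibility of $\alpha y_{3s-2} + y_{3s-1} y_{3s}$ in $A[y_{3s-2}, y_{3s-1}, y_{3s}]$ alone does not yield primality when $A$ need not be a UFD, so the torsion-freeness argument---reducing the question to the fraction-field calculation---becomes the crucial technical step.
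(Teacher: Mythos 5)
Your proposal is correct and takes a genuinely different route from the paper. The paper reduces to the ideal generated by $\mathcal{K} = B_p$ in the same way you do, but then inducts on $|\mathcal{K}|$ by removing a single $p_i$: if the variables of $p_i$ overlap those of the rest, it introduces duplicate ``primed'' variables $X'$, applies Proposition~\ref{prop:cartesian} to the disjoint-variable ideal $\langle \mathcal{K} \setminus \{p_i\}\rangle + \langle p_i'\rangle$, and then invokes a result from Milne to the effect that a surjective ring homomorphism (here the gluing $x_m \mapsto x_m'$) ``preserves primality.'' That step requires the kernel $\langle x_m - x_m' : x_m \in X\rangle$ to be contained in the prime ideal being pushed forward, a hypothesis the paper does not verify and which does not obviously hold; your localization route sidesteps this entirely. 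You instead make the overlap structure explicit (consecutive $p_a$'s in $B_p$ differ by at least $4$, with $a' = a+4$ giving exactly one shared variable), decompose $B_p$ into disjoint chains, and for each chain induct on its length using $\alpha$-torsion-freeness to embed $R_s/I_s$ into $\operatorname{Frac}(A)[y_{3s-1}, y_{3s}]$. This is cleaner and, as you note, the torsion-freeness step is genuinely necessary since $A$ is typically not a UFD (already $\mathbb{C}[y_0,y_1,y_2,y_3]/(y_0y_1+y_2y_3)$ is not). For irredundancy the paper observes that $B$ is triangular under a variable ordering ($x_i$'s have themselves as main variables, $p_a$ has main variable $x_{a+1}$, all distinct), whereas you exhibit an explicit point of $\cV(B\setminus\{b\})$ off $\cV(b)$; your argument there is fine but should note that $x_{a-2}$ also appears in $p_{a-4}$, which may lie in $B$, and then check $p_{a-4}$ still vanishes at your chosen point (it does, since $x_{a-3}=0$). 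Both irredundancy arguments are valid and roughly equal in effort.
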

\begin{proof}  First, since polynomial rings over $\mathbb{C}$ are UFDs, primality is equivalent to irreducibility throughout.  Note that the generators of $I_B$ are a finite collection of variables and polynomials of the form $p_k$ for some odd integer(s) $k$. Let $\mathcal{X} = \{x_i: x_i\in I_B\}$ and $\mathcal{X}' = \{x_1,\dotsc,x_{2n+1}\}\setminus \mathcal{X}$. Furthermore, let $\mathcal{K} = \{p_k:p_k\in I_B\}$. By Proposition \ref{prop:cartesian}, it suffices to show the primality of the ideal generated by $\mathcal{K}$ in the ring $\mathbb{C}[\mathcal{X}']$, since the variables appearing in $\mathcal{K}$ are disjoint from those of $\mathcal{X}$. 
The base case $|\mathcal{K}| = 1$ holds if and only if the polynomial in $\mathcal{K}$ is irreducible. Let $i\in \mathbb{Z}$ so that $p_i\in\mathcal{K}$. It is easy to see that $p_i = x_{i-2}x_{i-1} + x_{i+1}x_{i+2}$ is irreducible. Fix an integer $k \geq 1$ and suppose that the result holds for all $\mathcal{K}'$ with $|\mathcal{K}'| = k$. Let $|\mathcal{K}| = k+1$ and let $p_i$ be any element of $\mathcal{K}$. By the induction hypothesis, $\mathcal{K}\setminus\{p_i\}$ generates a prime ideal. From here we split into the following two cases. \\

Case $1$: The variables of $p_i$ are disjoint from those of $\mathcal{K}\setminus\{p_i\}$. As noted above, $p_i$ generates a prime ideal in $\mathbb{C}[x_{i-2},x_{i-1},x_{i+1},x_{i+2}]$, so it also generates a prime ideal in $\mathbb{C}[\mathcal{X}]$. Moreover, the induction hypothesis gives that $\mathcal{K}\setminus\{p_i\}$ generates a prime ideal in $\mathbb{C}[\mathcal{X}'\setminus\{x_{i-2},x_{i-1},x_{i+1},x_{i+2}\}]$, further implying that $\mathcal{K}\setminus\{p_i\}$ generates a prime ideal in $\mathbb{C}[\mathcal{X}]$ by Proposition \ref{prop:cartesian}.\\

Case $2$: Some variables of $p_i$ also occur as variables of polynomials in $\mathcal{K}\setminus\{p_i\}$. Since $i$ is odd, $i-1$ and $i+1$ are even. Moreover, the variables $x_{i-1}$ (and $x_{i+1}$) appear in no other polynomial of $\mathcal{K}$, since $p_i,p_{i-2}\in\mathcal{K}$ (respectively, $p_i$ and $p_{i+2}$) implies both $i$ and $i-2$ (respectively, $i$ and $i+2$) are outside the set $B$ used to generate the original ideal $I_B$, contradicting that $B$ is generated by a Fibonacci subset of $A_n'$. Therefore, the only overlap in variables comes from $x_{i-2}$ and $x_{i+2}$. 

Let $X$ be the collection of variables in $p_i$ that also appear in polynomials of $\mathcal{K}\setminus\{p_i\}$. Define $Y := \{x_{i-2},x_{i-1},x_{i+1},x_{i+2}\}\setminus X$, and let $Z$ be the collection of variables in polynomials of $\mathcal{K}$ except the variables contained in $X$. Define a collection of new variables $X' := \{x_m' : x_m \in X\}$. Let the polynomial $p_i'$ be $p_i$ evaluated at the variables of $X'$ and $Y$, where each input variable matches the index of the existing variable. Let $I$ be the ideal generated by $\mathcal{K}\setminus\{p_i\}$. The induction hypothesis gives that $I$ is prime. The ideal $\langle p_i'\rangle$ is prime because $p_i'$ is irreducible. Proposition \ref{prop:cartesian} gives the primality of the ideal generated by $I+\langle p_i'\rangle$. Let $\sigma:\mathbb{C}[X\cup Z] \times \mathbb{C}[X'\cup Y] \to \mathbb{C}[X\cup Y\cup Z]$ be the quotient homomorphism $\sigma:f \mapsto f+ \langle \{x_i - x_i' : x_i \in X\} \rangle$. Clearly, $\sigma$ is surjective, so Proposition $3.34$b in \cite{Milne17} (that surjective homomorphisms preserve primality) completes the proof of primality. 

The second claim, that $B$ is irredundant, is straightforward to check from the conditions defining $U_i$, $i = 1,\ldots,4$: with respect to the variable ordering
$$
x_1 \prec x_3 \prec \cdots \prec x_{2n+1} \prec x_2 \prec x_4 \prec \cdots \prec x_{2n},
$$
the set $B$ is triangular (its $\prec$-main variables are distinct), so form a basis of $I_B = \langle B \rangle$. 
\end{proof}

If we let $\mathcal{W}_n$ denote the collection of affine varieties generated by ideals of $\mathcal{I}_n$, i.e., $\mathcal{W}_n := \bigcup_{I\in\mathcal{I}_n} \mathcal{V}(I)$, then the previous lemma implies that all varieties in $\mathcal{W}_n$ are irreducible.  However, some of these varieties may not be inclusion-maximal, so they are not irreducible components, a matter we address presently. 

\begin{lemma}\label{Lem:MaxComps}
Let $I_V$ denote the ideal in $\mathcal{I}_n$ which generates the variety $V$. Furthermore, let $\Theta_V$ denote the collection of all maximal sets of consecutive odd-indexed $x_i \in I_V$ whose indices are contained in $A_n'$. Then, the variety $V\in\mathcal{W}_n$ is inclusion-maximal if and only if $\Theta_V$ does not contain a set with odd cardinality $m\geq 3$. 
\end{lemma}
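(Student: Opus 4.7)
The strategy uses the triangular-basis structure from Lemma \ref{Lem:IisPrime}, together with the following \emph{preliminary observation}: for every $j\in A_n'$, the variable $x_j$ lies in $I_V$ if and only if $j\in S$, where $S$ is the Fibonacci subset underlying $V$. This holds because every $p_k$-generator has its $\prec$-main variable on the even-indexed side, so no odd-indexed $x_j$ can lie in $I_V$ unless it already appears as a generator of $B$.

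For the forward direction (contrapositive), suppose $\Theta_V$ contains a run $\{b_1,\ldots,b_m\}$ with $m\geq 3$ odd. I will take $S'=S\setminus\{b_2,b_4,\ldots,b_{m-1}\}$ and verify that $S'$ is Fibonacci (every remaining consecutive pair in $A_n'$ still contains an element of $S'$). Choosing $B'$ with $U_2',U_3'$ matching $V$'s choices when possible (and defaulting to the forced $\{p_3\}$ or $\{p_{2n-1}\}$ when the boundary case toggles), the only changes from $B$ occur at the odd-indexed positions $b_1,b_3,\ldots,b_m$ of the run: $U_4$ (or $U_2, U_3$ at the boundary) now contributes $p_{b_{2j+1}}=x_{b_{2j}}x_{b_{2j+1}-1}+x_{b_{2j+1}+1}x_{b_{2j+2}}$. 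Each such $p$ lies in $I_V$ since $x_{b_{2j}},x_{b_{2j+2}}\in I_V$ for interior $j$, and at the endpoints the generators $x_{b_1-1},x_{b_m+1}\in I_V$ produced by the original $U_4$ at $b_1,b_m$ in $B$ handle the boundary products. Thus $I_{V'}\subseteq I_V$, and strict inclusion follows from $x_{b_2}\in I_V\setminus I_{V'}$ by the preliminary observation.

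For the backward direction, assume $\Theta_V$ has no odd run of length $\geq 3$, and suppose for contradiction that $V\subsetneq V'$ in $\mathcal{W}_n$, with Fibonacci subset $S'$. The preliminary observation forces $S'\subseteq S$, hence $S'\subsetneq S$. For any generic $b\in S\setminus S'$ (with $5\leq b\leq 2n-3$), Fibonacci forces $b\pm 2\in S'$, so $b$ lies in a run $R\subseteq S$ of length $\ell\geq 3$; by hypothesis $\ell$ is even with $\ell\geq 4$, and the endpoints $b_1,b_\ell$ lie in $S'$. Set $D=R\cap(S\setminus S')\neq\emptyset$, which contains no two consecutive $b_i$. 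The heart of the argument is the combinatorial fact that the indicator sequence $\sigma=(\mathbf{1}_{b_i\in D})_{i=1}^{\ell}$---a binary string of length $\ell$ beginning and ending with $0$, containing a $1$, and avoiding $11$---must contain the substring $001$ or $100$. (Otherwise, avoiding $001$ forces the leftmost $1$ at position $2$, avoiding $100$ forces the rightmost at $\ell-1$, and propagating inward forces strict alternation $0,1,0,1,\ldots,1,0$, which requires $\ell$ odd.) Wherever $001$ appears at positions $i,i+1,i+2$, the $U_4$ contribution at $b_{i+1}$ in $B'$ becomes $\{x_{b_{i+2}-1}\}$, an even-indexed variable that is \emph{not} in $I_V$: the $U_4$ slots at $b_{i+1},b_{i+2}$ in $B$ are both empty (since all relevant odd neighbors lie in $S$), and no $p_k\in B$ contains $x_{b_{i+2}-1}$. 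This contradicts $I_{V'}\subseteq I_V$; the substring $100$ is handled symmetrically.

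The main obstacle will be the boundary cases: when a run contains $3$ or $2n-1$ so that $U_2$ or $U_3$ replaces $U_4$ at that endpoint, and (in the backward direction) the edge case where the dropped element itself equals $3$ or $2n-1$. In that last case, $U_2$ (or $U_3$) toggles in $B'$ to $\{x_1,x_2\}$ (resp.\ $\{x_{2n},x_{2n+1}\}$); since $V$'s choice of $U_2$ (resp.\ $U_3$) was one of the singletons $\{x_1\},\{x_2\}$ (resp.\ $\{x_{2n}\},\{x_{2n+1}\}$), exactly one of the newly required generators fails to lie in $I_V$, again yielding a contradiction. These routine boundary modifications do not alter the structure of the main argument.
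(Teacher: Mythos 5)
Your overall strategy is sound and your forward direction is essentially the same as the paper's (remove the even-position elements of the odd-length run from $S$, let the $U_4$/$U_2$/$U_3$ rules regenerate the admissible set, and check that the resulting ideal is properly contained in $I_V$ via $x_{b_2}$). Your backward direction, however, takes a genuinely different route: the paper picks a polynomial $p_a\in B'\setminus B$ and case-analyzes on which of $x_{a\pm1},x_{a\pm2}$ lie in $B\setminus B'$, whereas you work on the Fibonacci subsets directly, pass to the binary indicator string $\sigma$ of $R\cap(S\setminus S')$, and invoke a clean combinatorial fact (any $11$-avoiding even-length $0\cdots0$ string containing a $1$ must contain $001$ or $100$); the resulting contradiction (an even-indexed variable $x_{b_{i+2}-1}$ lying in $B'$ but not in $I_V$) is tidier and more symmetric than the paper's three-case analysis. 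Your ``preliminary observation'' plays the same role as the paper's Claim but is proved by a different (and in principle slicker) mechanism. That said, two points need tightening. First, the justification of the preliminary observation is not yet rigorous: having distinct $\prec$-main variables (triangularity) is the paper's argument for \emph{irredundancy} of $B$, not for the stronger fact that a variable not in $B$ is not in $\langle B\rangle$. To make your argument airtight, either note that the leading monomials of the generators (under the block ordering) are pairwise coprime, so $B$ is a Gr\"obner basis and a variable lies in $I_V$ iff it is a generator, or---more simply---observe that the $p_a$ are homogeneous of degree $2$, so the degree-one graded piece of $I_V$ is exactly the linear span of the single-variable generators of $B$. Second, the assertion that ``the endpoints $b_1,b_\ell$ lie in $S'$'' fails without comment when a run endpoint is $3$ or $2n-1$ and has been dropped; your boundary discussion does handle that case correctly (via the $U_2/U_3$ toggle to $\{x_1,x_2\}$ or $\{x_{2n},x_{2n+1}\}$), but it should be placed logically \emph{before} the generic-$b$ argument: first dispose of the case $\{3,2n-1\}\cap(S\setminus S')\neq\emptyset$, and only then invoke the $001/100$ lemma, since the lemma requires $\sigma$ to start and end with $0$.
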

\begin{proof}
Suppose first that there exists an odd $m\geq 3$ so that $X:=\{x_a,x_{a+2},\ldots,x_{a+2(m-1)}\}\in\Theta_V$. Let $B$ be the generating set for $I_V$ which corresponds to an $S$-admissible set for some $S\in\mathcal{F}_n$. If $a>3$, then the maximality of $X$ implies $x_{a-2}\notin I_V$, giving that $x_{a-1}\in I_V$ by condition (4) in the definition of $S$-admissible. On the other hand, if $a = 3$, then condition (2) gives the presence of either $x_1$ or $x_2$ in $I_V$. In either case, there exists $q_1\in\mathbb{N}$ so that $x_{q_1}\in I_V \cap \{x_{a-2},x_{a-1}\}$. Similarly, there exists $q_2$ so that $x_{q_2}\in I_V\cap \{x_{a+2m-1},x_{a+2m}\}$. Now, define $X' := \{x_{q_1},x_{q_2}\}\cup \{x_{a+2},x_{a+6},\dotsc,x_{a+2(m-2)}\}$, which is well-defined since $|X|$ is odd, and, let $P' = \{p_a,p_{a+4},\dotsc,p_{a+2(m-1)}\}$. Note that $|X'| = |P'| + 1$, so that $|B| > |(B\setminus X')\cup P'|$. Inspection shows that $(B\setminus X')\cup P'$ is an $S$-admissible set for some $S\in\mathcal{F}_n$. Moreover, if every polynomial in $B$ yields $0$ when evaluated at a tuple $(c_1,c_2,\dotsc,c_{2n+1}) \in \mathbb{C}^{2n+1}$, then $(c_1,c_2,\dotsc,c_{2n+1})$ is also a common zero of all polynomials in $(B\setminus X')\cup P'$, since all polynomials of $P'$ evaluate to zero if those of $X\cup\{x_{q_1},x_{q_2}\} \subseteq B$ do as well. Then $V$ is not maximal. 

It remains to establish the converse. If $n \leq 3$, it is straightforward to check that the varieties in $\mathcal{W}_n$ are maximal. Suppose now that $n\geq 4$ and that $V\in\mathcal{W}_n$ is not maximal, so there exists $V'\in\mathcal{W}_n$ with $V \subsetneq V'$. Let $B$ and $B'$ be the admissible sets which generate $I_V$ and $I_{V'}$ respectively, meaning $B$ and $B'$ also generate $V$ and $V'$. Since $V \subset V'$, if values for $x_1,\dotsc,x_{2n+1}$ are chosen so that all polynomials in $B$ are zero, then all the polynomials in $B'$ are also zero for the same choice of values for $x_1,\dotsc,x_{2n+1}$. By the definition of admissible sets, $B$ and $B'$ are each minimal generating sets of their respective ideals, and additionally $B\cap B' \notin\{B,B'\}$, i.e., neither is a subset of the other. 

Next, we establish the following claim regarding the inclusion of single-variable monomials between $B$ and $B'$. Let $i\in [2n-1]$. 
\begin{claim}
\item If $x_i\notin B$, then $x_i\notin B'$. 
\end{claim}
\begin{claimproof}
Suppose $i\in [2n-1]$ and $x_i\notin B$. Let $\mathbf{c} = (c_1,\ldots,c_{2n+1}) \in V$.  If $c_i \neq 0$, then $x_i\notin B'$, as otherwise $\mathbf{c} \not \in V'$, contradicting that $V \subset V'$. Suppose now that $c_i = 0$. The following cases construct another point $\mathbf{c}'$ so that $\mathbf{c}'\in V$ with $c'_i\neq 0$, again obtaining a contradiction to $V \subset V'$.

Case $1$: $i\in \{1,2,2n,2n+1\}$. Without loss of generality, suppose $i = 1$, and note that the only polynomials of any admissible set in which $x_{1}$ occurs are $x_{1}$ and $p_{3}$, and, in this case, $x_1 \not \in B$. If $p_3\in B$, define $\mathbf{c}'$ so that $c'_i = c_i$ for $i \not \in \{1,2\}$, but $c'_1 = 1$ and $c'_{2} = -x_{4} x_{3}$.   The choice of $c'_2$ gives $p_3(\mathbf{c'}) = 0$. Since $p_3\in B$ implies $x_2\notin B$, $\mathbf{c}' \in V$. If $p_3\notin B$, define $\mathbf{c}'$ so that $c'_j = c_j$ for $j \neq 1$, but $c'_1 = 1$. All polynomials of $B$ are  satisfied by $\mathbf{c}'$. \\

Case $2$: $3 \leq i\leq 2n-1$ and $i$ odd. Note that the only possible polynomials containing $x_i$ are $p_{i-2}$, $p_{i+2}$, and $x_i$. By assumption, $x_i\notin B$, leaving only $p_{i-2}$ and $p_{i+2}$. Define $\mathbf{c}'$ so that $c'_j = c_j$ for $j \not \in \{i-1,i,i+1\}$ and $c_i' = 1$. If $p_{i-2}\in B$ (resp.~$p_{i+2}\in B$), define $c_{i-1}' = -x_{i-4} x_{i-3}$ (resp.~$c_{i+1}' = -x_{i+4} x_{i+3}$), so that $p_{i-2}(\mathbf{c}')=0$ (resp.~$p_{i+2}(\mathbf{c}')=0$). The existence of $p_{i-2}\in B$ (resp.~$p_{i+2}$) implies $x_{i-1}\notin B$ (resp.~$x_{i+1}\notin B$). Clearly, $p_i$ is the only other polynomial containing either $x_{i-1}$ or $x_{i+1}$, but $x_i\notin B$ implies $x_{i-2},x_{i+2}\in B$, further giving that $p_i\notin B$. Therefore, all polynomials of $B$ are satisfied by $\mathbf{c}'$.\\

Case $3$: $3 \leq i\leq 2n-1$ and $i$ even. Note that the only possible polynomials containing $x_i$ are $p_{i-1}$, $p_{i+1}$, and $x_i$. By assumption, $x_i\notin B$, leaving only $p_{i-1}$ and $p_{i+1}$. If $x_{i-1},x_{i+1} \in B$, then $p_{i-1},p_{i+1}\notin B$, so defining $c_j' = c_j$ for $j \neq i$ and $c_i' = 1$ yields a $\mathbf{c}'$ satisfying all polynomials of $B$. Suppose now that not both of $x_{i-1}$ and $x_{i+1}$ are in $B$. Condition (1) gives that at least one of $x_{i-1}$ and $x_{i+1}$ are in $B$, so $B$ cannot contain both of $p_{i-1}$ and $p_{i+1}$. Without loss of generality, suppose $p_{i-1}\in B$, giving that $x_{i+1}\notin B$. In this case, define $\mathbf{c}'$ so that $c'_j = c_j$ for $j \not \in \{i,i+1,i+2\}$, $c_i' = 1$, and $c_{i+1}' = -c_{i-3}c_{i-2}$. Then $p_{i-1}(\mathbf{c}') = 0$, so the only other polynomial containing $x_{i+1}$ is $p_{i+3}$. If $c_{i+1}' = 0$, then we already have $\mathbf{c}'\in V$. Suppose now that $c_{i+1}' \neq 0$. If $p_{i+3}\notin B$, then take $c_{i+2}' = c_{i+2}$, and $\mathbf{c}'\in V$. Otherwise, take $c_{i+2}' = -c_{i+4}c_{i+5}/c_{i+1}'$. In this case, $x_{i+3},p_{i+3}\in B$ gives that $x_{i+2}\notin B$. Furthermore,  $x_{i+3}\in B$ also implies $p_{i+1}\notin B$, meaning $p_{i+3}$ is the only polynomial of $B$ containing $x_{i+2}$. Therefore, in this case, $\mathbf{c}'\in V$.
\end{claimproof}

We will often use the above claim in contrapositive form, i.e., if $x_i\in B'$, then $x_i\in B$.\\

Suppose the polynomial $p_a$ is an element of $B' \setminus B$. Thus, $x_{a-2}\in B\setminus B'$ or $x_{a-1}\in B\setminus B'$ by condition (4). The same conclusion can be drawn of $x_{a+1}$ or $x_{a+2}$. Without loss of generality, there are three cases: $x_{a-2},x_{a+2} \in B \setminus B'$ and $x_{a-1},x_{a+1} \not \in B \setminus B'$, $x_{a-1},x_{a+1} \in B \setminus B'$, and $x_{a-1},x_{a+2} \in B \setminus B'$. Suppose, by way of contradiction, that $\Theta_V$ does not contain a set of odd cardinality greater than $1$.\\

Case $1$: $\{x_{a-2},x_{a+2}\}\subset B \setminus B'$ and $x_{a-1},x_{a+1} \not \in B \setminus B'$. Since $B'$ is an admissible set, then $x_a \in B'$, further implying $x_a \in B$ by the above claim. Let $M_a$ be the element of $\Theta_V$ containing $x_a$. By assumption, $|M_a|$ is even. If $M_a^L$ denotes the subset of variables in $M_a$ with indices less than $a$ and $M_a^R$ denotes the subset of variables in $M_a$ with indices greater than $a$, then exactly one of $|M_a^L|$ and $|M_a^R|$ is odd. Without loss of generality, suppose $|M_a^R|$ is odd, and let $x_{a+2q}$ be the variable of largest index in $M_a$. Clearly $q \geq 1$. 

Since $x_{a+2m}\in B$ for all $0\leq m\leq q$, we have that $x_{a+2m+1}\notin B$ for each $0\leq m\leq q-1$ by condition (4). The above claim gives that $x_{a+2m+1}\notin B'$ for each $0\leq m\leq q-1$. Therefore, $x_{a+2}, x_{a+3} \not \in B'$, so $p_{a+4} \in B'$ by condition (4), so $x_{a+6} \not \in B'$.  Repeating this argument, $B'\setminus B$ contains polynomials $p_i$ for $i\in\{a,a+4,a+8,\dotsc,a+2(q-1)\}$, since $|M_a^R|$ odd implies $q$ odd. Furthermore, $x_{a-2}, x_{a+2}, x_{a+6}, \dotsc, x_{a+2q} \notin B'$. If $a+2(q+1)\leq 2n-1$, then $x_{a+2q}$ being the variable with maximum index in $M_a$ implies $x_{a+2(q+1)}\notin B$. The above claim gives $x_{a+2(q+1)} \notin B'$, and this together with $x_{a+2q}\notin B'$ contradicts condition (1). Therefore, $a+2q = 2n+1$. Since $x_{a+2(q-1)},x_{a+2q}\in B$, then exactly one of $x_{2n}$ and $x_{2n+1}$ are not in $B$. Without loss of generality, suppose $x_{2n}\notin B$. By the above claim, we have that $x_{2n}\notin B'$. This together with $x_{a+2q}\notin B'$ contradicts condition (3), completing the case. \\

Case $2$: $\{x_{a-1},x_{a+1}\}\subseteq B \setminus B'$. By the definition of an admissible set, we have $x_a\notin B$ (as otherwise implies $x_{a-2}\notin B$ and $x_{a+2}\notin B$, giving that $p_a\in B$, a contradiction). The absence of $x_a$ in $B$ further implies that $x_a\notin B'$ by the above claim. If $3 < a < 2n - 1$, then $\{x_{a-2},x_{a+2}\}\subseteq B$. If $a = 3$ or $a = 2n-1$, suppose without loss of generality that $a = 3$, in which case $x_{a+2} \in B$. For any $a$, there exists $x_j$ with $j\in\{a+2,a-2\}$ so that $3\leq j\leq 2n-1$ and $x_j\in B$. The presence of $p_a\in B'$ requires $x_j\notin B'$. This together with $x_a\notin B'$ contradicts the definition of an admissible set. \\

Case $3$: Without loss of generality, $\{x_{a-1},x_{a+2}\}\subseteq B \setminus B'$. Suppose that $3 < a < 2n-1$. Since $B'$ is an admissible set, $x_{a+2}\notin B'$ implies $x_a\in B'$, so $x_a \in B$ by the above claim. Furthermore, $\{x_a,x_{a-1}\}\subset B$ implies $x_{a-2}\notin B$, giving that $x_{a-2}\notin B'$, again by the above claim. Let $M_a$ be the element of $\Theta_V$ containing $x_a$. We have that $x_a$ is the variable with smallest index in $M_a$, since $x_{a-2}\notin B$. Let $x_{a+2q}$ be the variable with largest index in $M_a$. Since $|M_a|$ is even, we have that $q\geq 1$ is odd. Therefore, applying the argument from case $1$ completes this case as well. \\

Since this considers all cases, this completes the proof that, if $V$ is not maximal, then $B$ contains a maximal odd order collection of monomials with consecutive indices in $A_n'$. 
\end{proof}

Let $\cJ_n$ denote the collection of all ideals in $\mathcal{I}_n$ which generate inclusion-maximal irreducible varieties. Furthermore, define $\cT_n$ to be the subcollection of $\bigcup_{S\in\mathcal{F}_n} \cT_S$ containing all admissible sets which generate ideals in $\cJ_n$. Lastly, define $\hat{\mathcal{F}}_n$ to be the subcollection of $\mathcal{F}_n$ containing all Fibonacci subsets of $A_n'$ which give rise to at least one admissible set in $\cT_n$, i.e., subsets $S$ of $A_n' = \{3,5,\ldots,2n-1\}$ so that at least one of every two consecutive elements of $A_n'$ belong to $S$, and so that maximal intervals of $A_n'$ contained in $S$ are either a single element or have even length. 

\begin{theorem}\label{Thm:IrredComps}
If $\cH = P_n^3$ for some $n\geq 3$, then the null variety $V_0$ of $\cH$ can be written $\cup_{J\in\cJ_n} \cV(J)$, where $\cJ_n$ is as defined above and each $J \in \cJ_n$ is an irreducible component of $V_0$.
\end{theorem}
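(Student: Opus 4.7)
The plan is to carry out the strategy outlined at the start of Section \ref{Sec:GeoMult}: write down the ideal $I_0$ whose zero locus is $V_0$, decompose $V_0$ by applying $\cV(fg)=\cV(f)\cup\cV(g)$ to each monomial generator, match each resulting piece with $\cV(I_B)$ for some $S$-admissible $B$, and then invoke Lemmas \ref{Lem:IisPrime} and \ref{Lem:MaxComps} to pick out the irreducible components.

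First I would enumerate the generators of $I_0$ explicitly. The partial of $A(\cH)x^3$ with respect to $x_i$ is (a scalar multiple of) the Lagrangian of the link of $v_i$, so the generators are $x_2x_3$ (from $v_1$), $x_{2k-1}x_{2k+1}$ (from $v_{2k}$, $1\le k\le n$), $x_{2n-1}x_{2n}$ (from $v_{2n+1}$), and $p_{2k+1}$ (from $v_{2k+1}$, $1\le k\le n-1$). For $2\le k\le n-1$, each monomial $x_{2k-1}x_{2k+1}$ forces at least one of a consecutive pair in $A'_n=\{3,5,\ldots,2n-1\}$ to vanish, so any point of $V_0$ determines a Fibonacci subset $S\in\cF_n$ recording which odd-indexed variables from $A'_n$ vanish. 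The boundary monomials and the polynomial $p_a$ are then reduced according to $S$.

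Next I would check that the cases defining $U_2, U_3, U_4$ exhaust exactly the reductions forced by $S$. The three cases in $U_2$ correspond to the three Fibonacci-compatible patterns for the pair $(3,5)$: if $3\in S$, $5\notin S$, then $p_3$ survives as a generator; if $5\in S$, $3\notin S$, then $v_1$ and $v_2$ force $x_1=x_2=0$; if $\{3,5\}\subset S$, then $p_3$ reduces to $x_1x_2=0$, requiring $x_1=0$ or $x_2=0$. A symmetric analysis gives $U_3$. For inner $a\in A_n$, the Fibonacci condition forces a case split on $\{a-2,a+2\}\cap S$, and $U_4$ records what survives after substituting $x_i=0$ for $i\in S$ into $p_a$. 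Taking the union over all $S\in\cF_n$ and all admissible completions yields
\[
V_0 \;=\; \bigcup_{I\in\cI_n} \cV(I).
\]
By Lemma \ref{Lem:IisPrime}, each $\cV(I)$ in this union is irreducible, and by Lemma \ref{Lem:MaxComps} the inclusion-maximal ones are precisely $\{\cV(J):J\in\cJ_n\}$. Since the irreducible components of a finite union of irreducible closed sets are its inclusion-maximal members, this gives the advertised decomposition $V_0=\bigcup_{J\in\cJ_n}\cV(J)$ with each $\cV(J)$ an irreducible component.

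The main obstacle will be the bookkeeping in the second paragraph: checking that both inclusions $V_0\subseteq\bigcup\cV(I_B)$ and $\bigcup\cV(I_B)\subseteq V_0$ hold. The latter amounts to verifying that every polynomial in $B$ implies the original generators of $I_0$ on $\cV(I_B)$, while the former requires showing that every point of $V_0$ lies in at least one $\cV(I_B)$. Because admissibility was engineered precisely for this decomposition, the verification is mechanical, but the branching at the boundary vertices $v_1,v_2,v_{2n},v_{2n+1}$ (handled by $U_2$ and $U_3$) must be treated with care, as does the simplification of each $p_a$ in $U_4$ according to whether $0$, $1$, or $2$ of its ``neighbor'' variables are forced to vanish.
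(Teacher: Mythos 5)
Your proposal is correct and follows essentially the same approach as the paper: identify the link polynomials as generators, split the monomial generators via $\cV(fg)=\cV(f)\cup\cV(g)$ to get Fibonacci subsets of $A_n'$, handle the boundary/interior cases to see each resulting piece is some $\cV(I_B)$ with $B$ an $S$-admissible set, then apply Lemmas \ref{Lem:IisPrime} and \ref{Lem:MaxComps}. The only cosmetic difference is that the paper organizes the edge-cover bookkeeping via an auxiliary graph $G$ on the monomial indices, whereas you reason about the pairs directly.
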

\begin{proof}
Recall that the hyperpath $\cH$ has exactly $2n+1$ vertices, and we label them with $\{v_1,\ldots,v_{2n+1}\}$ so that the $j$-th edge is $e_j = \{v_{2(j-1)+1},\ldots,v_{2j+1}\}$ for $j = 1,\ldots,n$. 

In constructing the equations that define $V_0$, there are $n-1$ vertices giving rise to equations of the form $p_k = 0$, while the other $n+2$ vertices give equations of the form $x_ix_j = 0$. We begin by considering the variety defined by all polynomials of the second form. Let $x_{i_k}x_{j_k}$ for $1\leq k\leq n+2$ be the $n+2$ polynomials of this form. Then 
$$
\cV \left( \{x_{i_k}x_{j_k}\}_{k=1}^{n+2} \right) = \bigcap_{k=1}^{n+2} \cV(x_{i_k}x_{j_k}) = \bigcap_{k=1}^{n+2} \left( \cV(x_{i_k}) \cup \cV(x_{j_k}) \right) .
$$
Let $\ell_k \in \{i_k,j_k\}$ for each $1\leq k\leq n+2$, so that 
$$
\bigcap_{k=1}^{n+2} \left( \cV(x_{i_k}) \cup \cV(x_{j_k}) \right) = \bigcup_{\{\ell_k\}_{k=1}^{n+2}} \cV(\{x_{\ell_k}\}_{k=1}^{n+2}).
$$
Let $L$ be the collection of all choices of $\{\ell_k\}$. To facilitate analysis of the sets in $L$, we construct a graph $G$, where the vertices of $G$ are labeled with the distinct $\ell_k$, and edges connect $\ell_k$ and $\ell_{k'}$ if and only if $x_{\ell_k}x_{\ell_{k'}} \in \{x_{i_k}x_{j_k}\}_{k=1}^{n+2}$. Based on the structure of $\cH$ and the vertex labeling given originally, $G$ has the following form. 
\begin{figure}[H]
\centering
\begin{tikzpicture}
\filldraw[black](-3,1)circle(0.1);
\filldraw[black](-3,-1)circle(0.1);
\filldraw[black](-2,0)circle(0.1);
\filldraw[black](-1,0)circle(0.1);

\filldraw[black](3,1)circle(0.1);
\filldraw[black](3,-1)circle(0.1);
\filldraw[black](2,0)circle(0.1);
\filldraw[black](1,0)circle(0.1);

\draw[thick,black] (-3,1) -- (-2,0) -- (-0.5,0);
\draw[thick,black] (-3,-1) -- (-2,0);
\draw[thick,black] (3,1) -- (2,0) -- (0.5,0);
\draw[thick,black] (3,-1) -- (2,0);
\node at (0,0) {$\cdots$};

\node at (-3.25,1) {$1$};
\node at (-3.25,-1) {$2$};
\node at (-2.35,0) {$3$};
\node at (-1,0.35) {$5$};

\node at (3.8,1) {$2n+1$};
\node at (3.5,-1) {$2n$};
\node at (2.8,0) {$2n-1$};
\node at (1,0.35) {$2n-3$};
\end{tikzpicture}
\end{figure}
An element of $L$ corresponds to a set of vertices in $G$ covering $E(G)$, since the vertices of $G$ are labeled by variable indices, edges are given by pairs of indices in a term of $\bigcap_{k=1}^{2n+1} \left( \cV(x_{i_k}) \cup \cV(x_{j_k}) \right)$, and $\bigcap_{k=1}^{2n+1} \left( \cV(x_{i_k}) \cup \cV(x_{j_k}) \right)$ is the union of intersections over one term from each element of $L$.

A subset $S$ of vertices in $G$ which is an edge cover must, in particular, cover the edges $\{3,5\}, \{5,7\}, \ldots, \{2n-3,2n-1\}$, so no two consecutive elements of $A_n'$ are absent from any such set.  In particular, $S \cap A_n' \in \cF_n$.  Let $\cX_S = \{x_i : i \in S\}$.  Since $3 \not \in S$ implies $1,2 \in S$ so that $S$ covers the edges $\{1,3\}$ and $\{2,3\}$, if $x_3 \not \in \cX_S$, then $x_1,x_2 \in \cX_S$.  Similarly, if $x_{2n-1} \not \in \cX_S$, then $x_{2n},x_{2n+1} \in \cX_S$.  Note that, for any odd $a$, if
\begin{equation} \label{eq1}
[(x_{a-2} = 0) \vee (x_{a-1}=0)] \wedge [(x_{a+1} = 0) \vee (x_{a+2}=0)]
\end{equation}
then $p_a = 0$.  Then let $P$ be the set of $p_a$ so that (\ref{eq1}) is {\em not} satisfied, and define $B = \cX_S \cup P$.  Then, for each $i \in A_n'$:
\begin{enumerate}
    \item If $i \not \in S$ and $i-4 \not \in S$, then $x_i \not \in B$, $x_{i-4} \not \in B$, and $p_{i-2} \in B$.
    \item If $i \not \in S$ and $i+4 \not \in S$, then $x_i \not \in B$, $x_{i+4} \not \in B$, and $p_{i+2} \in B$.
    \item If $i \not \in S$ and $i-4 \in S$, then $x_i \not \in B$, $x_{i-4} \in B$, and $x_{i-1} \in B$.
    \item If $i \not \in S$ and $i+4 \in S$, then $x_i \not \in B$, $x_{i+4} \in B$, and $x_{i+3} \in B$.
    \item If $5 \not \in S$ and $x_1 \not \in B$, then $p_3 \in B$.
    \item If $2n-3 \not \in S$ and $x_{2n+1} \not \in B$, then $p_{2n-1} \in B$.
    \item If $5 \in S$ and $x_1\notin B$, then $x_2\in B$. 
    \item If $5 \in S$ and $x_2\notin B$, then $x_1\in B$. 
    \item If $2n-3 \in S$ and $x_{2n}\notin B$, then $x_{2n+1}\in B$. 
    \item If $2n-3 \in S$ and $x_{2n+1}\notin B$, then $x_{2n}\in B$. 
\end{enumerate}

Let $\cB$ be the set of all such $B$ generated by the above conditions. Then, we have that the null variety of $\cH$ is $\cup_{B \in \mathcal{B}} \mathcal{V}(B)$, and it is easy to see that this is exactly the same as the construction given by $\bigcup_{I \in \cI_n} \cV(I)$.  Since Lemma \ref{Lem:IisPrime} gives that each of these ideals are prime, the corresponding varieties are irreducible, giving that $\bigcup_{I \in \cI_n} \cV(I)$ is a decomposition of $V_0$ into irreducible varieties. Furthermore, Lemma \ref{Lem:MaxComps} determines the inclusion-maximal varieties under the inclusion relation, implying that $\cup_{J\in\cJ_n} \cV(J)$ is a decomposition of $V_0$ into its irreducible components.
\end{proof}

\begin{cor}\label{Cor:GeoMult}
For $n\geq 3$, the null variety $V_0$ of $P_n^3$ has dimension $2 \lfloor n/2 \rfloor + 1$.
\end{cor}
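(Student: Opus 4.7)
The plan is to determine $\dim V_0 = \max_{B \in \cT_n} \dim \cV(I_B)$ via Theorem~\ref{Thm:IrredComps}, in two steps: first show $\dim \cV(I_B) = (2n+1) - |B|$, then minimize $|B|$ across admissible $B$.

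For step one, the structural observation already used in the proof of Lemma~\ref{Lem:IisPrime} is that the generators of $I_B$ partition into monomial generators $x_i$ and binomial generators $p_j = x_{j-2}x_{j-1} + x_{j+1}x_{j+2}$ with pairwise disjoint variable sets. After reordering coordinates, $\cV(I_B)$ is therefore a product of a coordinate subspace (cut out by the monomials, of codimension equal to their number) and a product of irreducible quadric hypersurfaces $\cV(p_j)$ (each of codimension $1$ in its own four ambient coordinates, by the irreducibility argument in Lemma~\ref{Lem:IisPrime}). Codimensions add, giving $\dim \cV(I_B) = (2n+1) - |B|$.

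For step two, I would express $|B| = |U_1| + |U_2| + |U_3| + |U_4|$ in terms of the underlying Fibonacci subset $S \in \hat{\cF}_n$ by tracking the maximal interval structure of $S$ in $A_n'$. Letting $r$ be the number of these intervals and $s_1$ the number of singletons among them, a direct enumeration (using the Fibonacci property that gaps of $S$ are isolated, and the admissibility requirement that interval lengths are $1$ or even) yields
$$
|B| = n + 2 + (r - s_1) - [3 \in S] - [2n-1 \in S].
$$
The key input is the $|U_4|$ computation: an element $a \in A_n$ contributes $0$ to $U_4$ iff both of $a-2, a+2$ lie in $S$, which occurs either when $a$ is interior to an $S$-interval of length $\geq 3$ or when $a$ is an isolated gap between two consecutive $S$-intervals. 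Since $r \geq s_1$ and the bracket sum is at most $2$, one has $|B| \geq n$. This bound is attained when $n$ is even by $S = \{3, 7, 11, \ldots, 2n-1\}$ (all singletons, with both boundary positions of $A_n'$ in $S$); when $n$ is odd, the parity of the indexing in $A_n'$ prevents simultaneously having all singleton intervals and containing both $3$ and $2n-1$, so the infimum becomes $n+1$, realized by $S = \{3, 7, \ldots, 2n-3\}$. Hence $\min_B |B| = 2\lceil n/2 \rceil$ and $\dim V_0 = (2n+1) - 2\lceil n/2\rceil = 2\lfloor n/2\rfloor + 1$.

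The principal obstacle is verifying the $|B|$-identity: one must check it across all the boundary cases defining $U_2$ and $U_3$ (which interact with admissibility at $3$ and $2n-1$ in slightly subtle ways) and correctly separate the two geometrically distinct reasons an element of $A_n$ can contribute $0$ to $|U_4|$. Once the formula is in hand, the optimization over $\hat{\cF}_n$ is elementary.
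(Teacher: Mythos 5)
Your two-step framework — prove $\dim \cV(I_B) = (2n+1) - |B|$, then minimize $|B|$ over admissible sets — matches the reasoning the paper uses implicitly (the dimension claim appears in the ``Enumeration of Components by Dimension'' subsection, and the minimization is encoded in the formula for $|B|$ derived there). Your Step~2 formula $|B| = n + 2 + (r - s_1) - [3\in S] - [2n-1\in S]$ is correct and equivalent to the paper's expression, and the parity analysis giving $\min |B| = 2\lceil n/2\rceil$ is sound.

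The gap is in Step~1. You assert that the binomial generators $p_j$ in $B$ have pairwise disjoint variable sets, so that $\cV(I_B)$ factors as a product of a coordinate subspace and independent quadric hypersurfaces. That disjointness is false: two binomials $p_a, p_{a+4} \in B$ share the odd-indexed variable $x_{a+2}$. For a concrete instance, take $n=6$ and $S=\{5,9\} \in \hat{\cF}_6$ (two singleton intervals, giving a maximal component); the resulting $B$ contains both $p_5 = x_3x_4 + x_6x_7$ and $p_9 = x_7x_8 + x_{10}x_{11}$, which share $x_7$. In fact the proof of Lemma~\ref{Lem:IisPrime} is explicitly organized around this possibility — its Case~2 is precisely the case where variables of $p_i$ overlap with those of other elements of $\cK$ — so the lemma does not give you the product structure you invoke from it. Consequently ``codimensions add'' does not follow from your argument. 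The conclusion $\dim \cV(I_B) = (2n+1) - |B|$ is nevertheless true; the mechanism that delivers it is the observation at the end of Lemma~\ref{Lem:IisPrime}'s proof that $B$ is triangular with respect to the ordering $x_1 \prec x_3 \prec \cdots \prec x_{2n+1} \prec x_2 \prec \cdots \prec x_{2n}$: the generators have pairwise distinct $\prec$-main variables, hence form a regular sequence, so $I_B$ has height $|B|$. You should replace the product argument with this one (or with an inductive regular-sequence argument) before the rest of the proof can stand.
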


As an illustration of Theorem \ref{Thm:IrredComps}, we list all the ideals that generate irreducible components of $V_0$ for $P_5^3$:
$$
\begin{array}{ccc}
\langle x_1,x_2,x_5,x_9, p_5, p_{9} \rangle & \langle x_1,x_2,x_4,x_5,x_7,x_8,x_{10},x_{11} \rangle & 
\langle x_3,x_7,x_{10},x_{11},p_3,p_7 \rangle \\
\langle x_3,x_6,x_7,x_{9},x_{10},p_3 \rangle &
\langle x_3,x_6,x_7,x_{9},x_{11},p_3 \rangle &
\langle x_1,x_3,x_5,x_6,x_{9},p_9 \rangle \\
\langle x_2,x_3,x_5,x_6,x_{9},p_9 \rangle &
\langle x_1,x_3,x_5,x_7,x_9,x_{10} \rangle &
\langle x_1,x_3,x_5,x_7,x_9,x_{11} \rangle \\
\langle x_2,x_3,x_5,x_7,x_9,x_{10} \rangle &
\langle x_2,x_3,x_5,x_7,x_9,x_{11} \rangle & 
\end{array}
$$

\subsection{Enumeration of Components by Dimension}

From here we work to determine the quantity of irreducible components of $V_0$ of different dimensions for each $P_n^3$. Fix an $n$. Let $B\in\mathcal{T}_n$, and let $S$ be such that $S \in\hat{\mathcal{F}}_n$ with $B$ an $S$-admissible set. Let $U_1, U_2,U_3, U_4$ be given so that $B = U_1 \cup U_2\cup U_3 \cup U_4$ as in the definition above. Noting that $|U_1| = |S|$, $|U_2| = \begin{cases}
2 & \mbox{if } x_3 \notin U_1 \\
1 & \mbox{otherwise}
\end{cases}$, $|U_3| = \begin{cases}
2 & \mbox{if } x_{2n-1} \notin U_1 \\
1 & \mbox{otherwise}
\end{cases}$, and $|U_4| = |\{a\in A_n: a-2\notin S\mbox{ or } a+2\notin S\}|$, the following computation gives an expression for $|B|$. 
\begin{align*}
|B| &= |S| + \begin{cases}
2 & \mbox{if } x_3 \notin U_1 \\
1 & \mbox{otherwise}
\end{cases} + \begin{cases}
2 & \mbox{if } x_{2n-1} \notin U_1 \\
1 & \mbox{otherwise}
\end{cases} + |\{a\in A_n: a-2\notin S\mbox{ or } a+2\notin S\}| \\
&= |S| + 1_{3\notin S} + 1_{2n-1\notin S} + |(A_n' - 2)\cap \overline{S}| + |(A_n' + 2)\cap \overline{S}| - |(A_n' - 2)\cap (A_n' + 2)\cap \overline{S}|\\
&= |S| + 1_{3\notin S} + 1_{2n-1\notin S} + |(A_n' - 2)\cap \overline{S}| + |(A_n' + 2)\cap \overline{S}| - |A_n\cap \overline{S}|\\
&= |S| + 1_{3\notin S} + 1_{2n-1\notin S} + |A_n'| - |(S-2)\cap (S+2)| \\
&= |S\cap A| + n+1 - |S\cap (S+4)| 
\end{align*}

Additionally, let $\mu_n(S)$ denote $|\mathcal{T}_{S} \cap \cJ_n|$, i.e., the number of irreducible components of $V_0$ generated by sets in $\mathcal{T}_S$. It is clear that $\mu_n(S) \in\{1,2,4\}$. All irreducible components generated by sets in $\mathcal{T}_{S}$ have dimension $2n+1 - |B|$ for some $B \in\mathcal{T}_{S}$, since the irreducible components all reside in $\mathbb{C}[x_1,\dotsc,x_{2n+1}]$, $|B_1| = |B_2|$ for all $B_1,B_2\in\mathcal{T}_{S}$, and the sets $B \in \cT_S$ are irredundant by Lemma \ref{Lem:IisPrime}. Consider the generating function 
\begin{align*}
g(y,z) &= \sum_{n\geq 0}\sum_{S \in\hat{\mathcal{F}}_n} y^{|B|} z^n.
\end{align*}
Note that $g(y,z)$ does {\em not} incorporate the multiplicity $\mu_n(S)$.  We first consider the expression given by the inner sum, namely 
$$
g_n(y) := \sum_{S \in \hat{\mathcal{F}}_n} y^{|B|}
$$
for a given $n \in \mathbb{N}$. Computation gives the following results for small values of $n$.
\begin{center}
\begin{minipage}{0.4\textwidth}
\begin{align*}
g_0(y) &= y \\
g_1(y) &= y^{2} \\
g_2(y) &= 2y^3 \\ 
\end{align*}
\end{minipage}
\begin{minipage}{0.4\textwidth}
\begin{align*}
g_3(y) &= 3y^4 \\ 
g_4(y) &= 3y^6 + y^4 \\ 
g_5(y) &= y^8 + 5y^6 \\ 
\end{align*}
\end{minipage}
\end{center}

We develop a recurrence for $g_n(y)$ aided by two new sequences of functions, $b_n(y)$ and $c_n(y)$, defined in the following way:
\begin{align*}
b_n(y) &= \sum_{S \in \hat{\mathcal{F}}_n,\{2n-3,2n-1\}\subseteq S} y^{|B|} \\
c_n(y) &= \sum_{S \in \hat{\mathcal{F}}_n,2n-3\notin S,2n-1\in S} y^{|B|} 
\end{align*}
For clarity, we define $b_0 = b_1 = b_2 = c_0 = c_1 = c_2 = 0$. Otherwise, we have the following small values of the two new sequences. 

\begin{center}
\begin{minipage}{0.4\textwidth}
\begin{align*}
b_3(y) &= y^4 \\
b_4(y) &= y^6 \\
b_5(y) &= 2y^6 \\ 
\end{align*}
\end{minipage}
\begin{minipage}{0.4\textwidth}
\begin{align*}
c_3(y) &= y^4 \\ 
c_4(y) &= y^4 \\ 
c_5(y) &= 2y^6 \\ 
\end{align*}
\end{minipage}
\end{center}

Note that, for each $S$ a Fibonacci subset of $A_n'$, at least one of $2n-3$ and $2n-1$ are included in $S$, so there are three options for $\{2n-1,2n-3\} \cap S$. All three can be expressed in terms of $b_n$, $c_n$, and $g_n$. A straightforward (if laborious) case analysis provides the following recurrences for the three sequences of functions. Note that these recurrences are valid only for $n\geq 5$.
\begin{align}
g_n(y) &= 2y^2g_{n-2}(y) + y^4b_{n-3}(y) + y^2(y^2-1)c_{n-2}(y) \label{eq2} \\
b_n(y) &= y^2g_{n-2}(y) - y^2c_{n-2}(y) \nonumber \\
c_n(y) &= y^2b_{n-2}(y) + y^2c_{n-2}(y) \nonumber
\end{align}
Recall that $g(y,z)$ is the generating function for $g_n(y)$. Analogously, let $b(y,z) = \sum_{n\geq 0} b_n(y)z^n$ and $c(y,z) = \sum_{n\geq 0} c_n(y)z^n$. The following computations work towards closed forms for $b$, $c$, and $g$.
\begin{align*}
g &= \sum_{n = 0}^4 g_nz^n + 2y^2 \sum_{n\geq 5} g_{n-2} z^n + y^4 \sum_{n\geq 5} b_{n-3} z^n + y^2(y^2-1)\sum_{n\geq 5} c_{n-2} z^n \\
&= \sum_{n = 0}^4 g_nz^n + 2y^2z^2 (g - \sum_{n=0}^2 g_{n} z^n) + y^4z^3b + y^2(y^2-1)z^2c \\
g &= \frac{3y^6z^4 - 4y^5z^4 + y^4z^4 + y^4z^3 + y^2z + y + y^4z^3b + y^2(y^2-1)z^2c }{ 1 - 2y^2z^2 } \\
b &= \sum_{n = 0}^4 b_n z^n + y^2 \sum_{n\geq 5} g_{n-2}z^n - y^2 \sum_{n\geq 5} c_{n-2} z^n = \sum_{n = 0}^4 b_n z^n + y^2z^2 (g - \sum_{n=0}^2 g_{n} z^n) - y^2z^2c \\
&= y^6z^4 - 2y^5z^4 - y^3z^2 + y^2z^2g - y^2z^2c \\
c &= \sum_{n=0}^4 c_n z^n + y^2 \sum_{n\geq 5} b_{n-2} z^n + y^2 \sum_{n\geq 5} c_{n-2} z^n = \sum_{n=0}^4 c_n z^n + y^2z^2b + y^2z^2c \\
c &= \frac{y^4z^4 + y^4z^3 + y^2z^2b}{1 - y^2z^2}
\end{align*}
Solving the system for $g$ gives the following. 
$$
g = -\frac{{\left(y^{8} - 2 \, y^{7} + y^{6}\right)} z^{6} - {\left(y^{8} - 2 \, y^{7}\right)} z^{5} - {\left(2 \, y^{6} - 3 \, y^{5} + y^{4}\right)} z^{4} + {\left(y^{5} - y^{4}\right)} z^{3} - y^{2} z - y}{y^{4} z^{4} - y^{4} z^{3} - 2 \, y^{2} z^{2} + 1}
$$

Recall that the exponent on $y$ in $g(y,z)$ is the co-dimension of the irreducible component of $V_0$ for $P_n^3$. Since we are interested in the dimension of these components, we make the following transformation. The dimension of each component is $2n+1$ minus its co-dimension. Thus, the function we want is given by $h(y,z) = y \cdot g(1/y,y^2z)$, expressible as follows (computations throughout performed by SageMath \cite{sagemath}).
\begin{align*}
h = \frac{-y^{7} z^{6} + 2 y^{6} z^{6} - y^{5} z^{6} + y^{5} z^{4} - 2 y^{4} z^{5} - 3 y^{4} z^{4} + y^{3} z^{5} + 2 y^{3} z^{4} + y^{3} z^{3} - y^{2} z^{3} + y z + 1}{y^{4} z^{4} - y^{2} z^{3} - 2 y^{2} z^{2} + 1}
\end{align*}
To help later with verifying Conjecture \ref{conj:HuYe}, differentiating with respect to $y$ gives the following expression and then plugging in $y=2$, because
$$
H(z) := \left . \frac{\partial}{\partial y} h(y,z) \right |_{y = 2} = \sum_{n\geq 0}\sum_{S \in\hat{\mathcal{F}}_n} (\dim \mathcal{V}(B)) 2^{\dim \mathcal{V}(B) -1 } z^n.
$$

The generating function obtained in this way encodes a lower bound on $\gm(0)$ of the conjecture, but four times this function is an upper bound. We get the following expression when substituting $y = 2$:
\begin{align*}
& \frac{-1280 z^{10} + 384 z^{9} + 1136 z^{8} + 192 z^{7} - 224 z^{6} - 132 z^{5} - 20 z^{4} + 20 z^{3} + 8 z^{2} + z}{256 z^{8} - 128 z^{7} - 240 z^{6} + 64 z^{5} + 96 z^{4} - 8 z^{3} - 16 z^{2} + 1} \\[.1in]
& \hspace{1in} = z + 8z^2 + 36z^3 + 116z^4 + 412z^5 + 1088z^6 + \cdots
\end{align*}

The smallest-magnitude root of the denominator lies in the interval $(0.37, 0.371)$. This implies that the coefficients of $H(z)$ have growth rate in the interval $(2.69, 2.71)$. We upper-bound the coefficients $\{\eta_n\}_{n \geq 0}$ of $H(z)$. Recall that Corollary \ref{Cor:GeoMult} gives that the maximum dimension of an irreducible component of $V_0$ for $P_n^3$ is $2 \lfloor n/2 \rfloor + 1$. Since we counted at most one component for each Fibonacci subset of $A_n'$, there are at most $F_n$ (the $n$-th Fibonacci number) terms which contribute to $\eta_n$. Therefore, $\eta_n$ is bounded above in the following way, given that $\phi = (1+\sqrt{5})/2$:
$$
\eta_n \leq \frac{\phi^n - (-\phi)^{-n}}{\sqrt{5}} \cdot (n+1)\cdot 2^n
$$

\subsection{Incorporating Multiplicity}

Recall that $\mu_n(S) \in \{1,2,4\}$ for $S \in \hat{\cF}_n$, but the above sums ignore this factor. Note that $\mu_n(S) > 1$ when either pair $\{3,5\}$ or $\{2n-3,2n-1\}$ are subsets of $S$. The sequence $b_n$ given above accounts for the subcollection of $\hat{\mathcal{F}}_n$ containing both $2n-3$ and $2n-1$, so $b$ is the generating function where the $y^m z^n$ coefficient counts the number of irreducible components of codimension $m$ from a hyperpath of length $n$ generated from a given $S$ containing both $2n-3$ and $2n-1$. By the symmetry of these Fibonacci subsets, the coefficients of $b$ also count the same quantity, where now the Fibonacci set $S$ contains both $3$ and $5$. So, $2b$ counts the $\{3,5\} \subseteq S$ and $\{2n-3,2n-1\}\not\subseteq S$ components once, the $\{2n-3,2n-1\}\subseteq S$ and $\{3,5\} \not\subseteq S$ components once, and the $\{3,5,2n-3,2n-1\}\subseteq S$ components twice. It only remains to count the $\{3,5,2n-3,2n-1\}\subseteq S$ components one additional time.

We now define $g_n'$, $b_n'$, and $c_n'$ to have the same conditions on the presence of $2n-3$ and $2n-1$ in $S$ as was given for $g_n$, $b_n$, and $c_n$ above, but now we require that $3$ and $5$ be in $S$, i.e.,
$$
g'_n(y) := \sum_{\substack{S \in \hat{\mathcal{F}}_n \\ 3,5 \in S}} y^{|B|}
$$
and analogously for $b'_n$ and $c'_n$.  We define all three sequences for $n\geq 0$, although some initial values are zero.  These modified sequences satisfy the exact same recurrences as displayed in (\ref{eq2}) for $n\geq 5$. 

Let $g'$, $b'$, and $c'$ be the generating functions with respect to the variable $z$ for the three sequences defined. Then, the generating function $b'$ counts exactly the $\{3,5,2n-3,2n-1\}\subseteq S$ components once. Computation gives the following rational expression for $g'$ and $b'$:
\begin{align*}
g' &= \frac{y^{6} z^{4} + y^{4} z^{3}}{y^{4} z^{4} - y^{4} z^{3} - 2 \, y^{2} z^{2} + 1} \\
b' &= -\frac{y^{6} z^{5} - y^{4} z^{3}}{y^{4} z^{4} - y^{4} z^{3} - 2 \, y^{2} z^{2} + 1} 
\end{align*}
Note that the generating function for $g'$ counts the same irreducible components as $b$ from above. Therefore, the generating function of $\gm(0)$, which incorporates multiplicity (aside from some initial terms), is given by $G = g + 2g' + b'$, and is given by the following rational function. 
\begin{align*}
G &= \left ( -y^{8} z^{6} + y^{8} z^{5} + 2 y^{7} z^{6} - 2 y^{7} z^{5} - y^{6} z^{6} - y^{6} z^{5} + 4 y^{6} z^{4} - 3 y^{5} z^{4} - y^{5} z^{3} + y^{4} z^{4} \right . \\
&\hspace{1in} \left . + \, 4 y^{4} z^{3} + y^{2} z + y \right )/(y^{4} z^{4} - y^{4} z^{3} - 2 y^{2} z^{2} + 1)
\end{align*}
Similarly to the previous subsection, we compute $h' = y \cdot G(1/y,y^2z)$, which is the generating function for the number of irreducible components of dimension given by the exponent on $y$ in $V_0$ for $P_n^3$, if $n$ is the exponent on $z$. 
\begin{align*}
h' &= \left ( -y^{7} z^{6} + 2 y^{6} z^{6} - y^{5} z^{6} - y^{5} z^{5} + y^{5} z^{4} - 2 y^{4} z^{5} - 3 y^{4} z^{4} + y^{3} z^{5} \right .\\
& \hspace{1in} \left . + \, 4 y^{3} z^{4} + 4 y^{3} z^{3} - y^{2} z^{3} + y z + 1 \right ) / (y^{4} z^{4} - y^{2} z^{3} - 2 y^{2} z^{2} + 1).
\end{align*}
Computing $\left . \frac{\partial}{\partial y} h'(y,z) \right |_{y=2}$ yields the following generating function:
\begin{align*}
&\frac{-1280 z^{10} + 128 z^{9} + 1200 z^{8} + 352 z^{7} - 336 z^{6} - 308 z^{5} + 4 z^{4} + 56 z^{3} + 8 z^{2} + z}{256 z^{8} - 128 z^{7} - 240 z^{6} + 64 z^{5} + 96 z^{4} - 8 z^{3} - 16 z^{2} + 1} \\
&= z + 8z^2 + 72z^3 + 140z^4 + 812z^5 + 1648z^6 + 7280z^7\\
& \qquad + 18064z^8 + 60928z^9 + 176576z^{10} + \cdots
\end{align*}
Here the linear and quadratic coefficients are incorrect, however, because incorporation of multiplicity only adjusts for $n \geq 3$. Modifying this expression via Propositions \ref{prop:oneedge} and \ref{prop:twoedge}, we obtain
\begin{align*}
H'(z) & = \frac{-256 z^{8} + 192 z^{7} + 272 z^{6} - 156 z^{5} - 92 z^{4} + 24 z^{3} + 13 z^{2} + 3 z}{256 z^{8} - 128 z^{7} - 240 z^{6} + 64 z^{5} + 96 z^{4} - 8 z^{3} - 16 z^{2} + 1} \\[.1in]
&= 3z + 13z^2 + 72z^3 + 140z^4 + 812z^5 + 1648z^6 + 7280z^7 \\
& \qquad + 18064z^8 + 60928z^9 + 176576z^{10} + \cdots
\end{align*}

\section{Algebraic Multiplicity of Zero}\label{Sec:AlgMult}

Let $D_{n,k}$ be the algebraic multiplicity of zero in the characteristic polynomial of $\phi_{P_n^k}(\lambda)$ (the $k$-uniform linear hyperpath with $n$ edges). We are given the following by the paper of Bao, Fan, Wang, and Zhu. 

\begin{theorem}[\cite{Bao20}] \label{Thm:FromBFWZ}
For $n\geq 2$, 
$$
\phi_{P_n^k}(\lambda) = \lambda^{(k-2)(k-1)^{n(k-1)}} \prod_{s=0}^n \left(\lambda - \frac{f^{s-1}(1)}{\lambda^{k-1}} \right)^{\nu_{n,k}(s)} \phi_{P_{n-1}^k}(\lambda)^{(k-1)^{k-1}},
$$
where 
$$
\nu_{n,k}(s) = 
\begin{cases}
k^{s(k-2)} ( (k-1)^{k-1} - k^{k-2} ) (k-1)^{(n-s-1)(k-1)} & \mbox{if } s\in[0,n-1], \\
k^{s(k-2)} & \mbox{if } s=n, \\
\end{cases}
$$
and 
$$
f^{i}(x) = 
\begin{cases}
0 & \mbox{if } i= -1, \\
1 & \mbox{if } i = 0, \\
f(x) = \frac{1}{1-x\lambda^{-k}} = \frac{\lambda^k}{\lambda^k - x} & \mbox{if } i = 1, \\
f^{i-1}(f(x)) & \mbox{if } i > 1.
\end{cases}
$$
\end{theorem}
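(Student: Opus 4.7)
Since the statement is a recursion in $n$, my plan is to induct on $n$, working directly with the resultant/hyperdeterminant definition of $\phi_{P_n^k}$ and exploiting the structural feature that the last edge $e_n$ of $P_n^k$ has $k-1$ pendant vertices and one vertex $w_0$ shared with $e_{n-1}$. The base case $n=2$ would be checked by direct computation (or $n=1$, treated separately).

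First, I would write out the eigenequation system. Using $A(P_n^k)x^k = k\sum_{j=1}^n \prod_{v\in e_j} x_v$, the equations read $\lambda x_v^{k-1} = \sum_{e\ni v}\prod_{u\in e\setminus\{v\}} x_u$ at each vertex $v$. At each pendant vertex $w_i$ of $e_n$, this becomes $\lambda x_{w_i}^{k-1} = x_{w_0}\prod_{j\ne i} x_{w_j}$; multiplying each such equation by $x_{w_i}$ and taking ratios gives $x_{w_i}^k = x_{w_0}\prod_j x_{w_j}/\lambda$ for all $i$, so for $\lambda,x_{w_0}\neq 0$ each pendant value satisfies $x_{w_i} = \zeta_i\, x_{w_0}/\lambda$ for some $k$-th root of unity $\zeta_i$, subject to the consistency $\prod_i \zeta_i = 1$. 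This yields exactly $k^{k-2}$ nonzero branches, while the degenerate branch $x_{w_0} = 0$ forces all $x_{w_i} = 0$. Substituting the nonzero branch back into the equation at $w_0$ replaces the $e_n$-contribution by a rational expression in $\lambda$ which computes to the M\"obius action $f(x) = \lambda^k/(\lambda^k - x)$ on the ``effective weight'' presented to $w_0$ by the truncated hyperpath $P_{n-1}^k$.

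Next, I would apply the Poisson-type product formula for resultants to the partition (pendant block of $k-1$ equations) $\sqcup$ (remaining equations). Each pendant equation has degree $k-1$ in its own variable, so the pendant block contributes a B\'ezout factor of $(k-1)^{k-1}$ on the reduced resultant, producing the $\phi_{P_{n-1}^k}(\lambda)^{(k-1)^{k-1}}$ term. The degenerate branch $x_{w_0}=0$ contributes the pure nullity factor $\lambda^{(k-2)(k-1)^{n(k-1)}}$ by counting free parameters. The generic branch, supplying $k^{k-2}$ nonzero pendant solutions per level, is then iterated across the edges $e_n, e_{n-1}, \ldots, e_1$: at depth $s$ the effective weight presented to the next boundary vertex is $f^{s-1}(1)$, yielding the eigenequation $\lambda^k = f^{s-1}(1)$, i.e.\ the factor $(\lambda - f^{s-1}(1)/\lambda^{k-1})$. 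Its multiplicity $\nu_{n,k}(s)$ decomposes as $k^{s(k-2)}\cdot\bigl((k-1)^{k-1} - k^{k-2}\bigr)\cdot (k-1)^{(n-s-1)(k-1)}$: the first factor counts branch choices accumulated during the first $s$ eliminations, the middle factor is the count of genuinely ``new'' eigenvalue branches at the current level (total B\'ezout minus those absorbed by the recursion), and the third factor is the B\'ezout multiplicity carried by the not-yet-processed edges. The case $s=n$ drops the middle factor since the iteration terminates at $e_1$.

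The main obstacle is the exponent bookkeeping inside the tensor Poisson formula. Unlike the matrix Schur complement, the tensor version carries multiplicity corrections coming from the local degree of each solved pendant fiber and from the resultant's base locus; correctly separating the degenerate ($x_{w_0}=0$) branch, which accrues the $\lambda^{(k-2)(k-1)^{n(k-1)}}$ factor, from the generic branches, which supply the $\nu_{n,k}(s)$ sum, requires careful tracking across $n$ recursion levels. A useful sanity check is the degree identity
\[
\sum_{s=0}^{n-1} k^{s(k-2)}\bigl((k-1)^{k-1} - k^{k-2}\bigr)(k-1)^{(n-s-1)(k-1)} + k^{n(k-2)} = (k-1)^{n(k-1)},
\]
which follows from the telescoping $\sum_{s=0}^{n-1} a^s b^{n-s-1}(b-a) = b^n - a^n$ with $a = k^{k-2}$, $b = (k-1)^{k-1}$, and confirms that the non-recursive contribution has exactly the right total degree relative to $\phi_{P_{n-1}^k}(\lambda)^{(k-1)^{k-1}}$.
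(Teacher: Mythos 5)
The theorem you were asked to prove is Theorem \ref{Thm:FromBFWZ}, which the paper explicitly attributes to Bao, Fan, Wang, and Zhu in \cite{Bao20} and uses without proof, so there is no proof in the paper to compare against. On the merits of your sketch: the pendant-elimination strategy -- solve the $k-1$ pendant equations for $e_n$ in terms of $x_{w_0}$, observe that the nonzero branches form $k^{k-2}$ families of $k$-th-root-of-unity multiples subject to $\prod_i\zeta_i=1$, fold the resulting rational expression back into the equation at $w_0$, and iterate via the M\"obius map $f$ -- is the standard approach to characteristic polynomials of hypertrees (cf.\ \cite{Cooper12}) and is very likely the one used in \cite{Bao20}. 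Your degree identity $\sum_{s=0}^{n}\nu_{n,k}(s)=(k-1)^{n(k-1)}$ checks out.

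However, what you have written is a plan rather than a proof, and you yourself flag the exponent bookkeeping in the tensor Poisson formula as the main unresolved obstacle. That bookkeeping is precisely where all the content lies: the factor $\lambda^{(k-2)(k-1)^{n(k-1)}}$, the exponent $(k-1)^{k-1}$ on $\phi_{P_{n-1}^k}$, and each individual $\nu_{n,k}(s)$ require justified multiplicity counts, and the aggregate degree check you perform does not determine the individual exponents. In particular the claim that the degenerate branch $x_{w_0}=0$ contributes exactly the power $\lambda^{(k-2)(k-1)^{n(k-1)}}$ ``by counting free parameters'' is asserted rather than argued, as is the assignment of the middle factor $(k-1)^{k-1}-k^{k-2}$ to ``genuinely new branches.'' Closing the gap would require either stating and invoking a precise Poisson-type resultant product formula for this block-triangular elimination (with explicit control over each factor's total degree and the contribution of the base locus $x_{w_0}=0$), or simply appealing to the intermediate lemmas of \cite{Bao20}, which is what the paper itself does.
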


We use these facts to prove the following. We start by proving the following lemma concerning the degree of the zero root in $f(x)$. 

\begin{lemma}\label{Lem:DegOff(1)}
Let $k\geq 2$ be given. Let $d_s$ be the degree of the zero root in the rational function $f^s(1)$. If $s\geq 1$, then $d_s = 0$ if $s$ is even and $d_s = k$ if $s$ is odd.  
\end{lemma}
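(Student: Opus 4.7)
The plan is an induction on $s$ based on the recursion $f^{s+1}(1) = \lambda^k/(\lambda^k - f^s(1))$. The order of the zero of $f^{s+1}(1)$ at $\lambda = 0$ equals $k$ minus the order of vanishing of $\lambda^k - f^s(1)$ there, so the task reduces to tracking how the leading Taylor coefficient of $f^s(1)$ at $\lambda = 0$ behaves under iteration. I would work with the formal Laurent expansion around $\lambda = 0$ throughout.

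For the even-to-odd step, suppose $f^s(1)$ is regular with nonzero value $a_s$ at $\lambda = 0$. Then $\lambda^k - f^s(1) = -a_s + O(\lambda)$ has order $0$, so $f^{s+1}(1)$ has order exactly $k$, i.e., $d_{s+1} = k$. One can read off the leading $\lambda^k$-coefficient of $f^{s+1}(1)$ as $b_{s+1} := -1/a_s$.

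The odd-to-even step is the subtle one. If $f^s(1) = b_s\lambda^k + O(\lambda^{k+1})$ with $b_s \neq 0$, then $\lambda^k - f^s(1) = (1 - b_s)\lambda^k + O(\lambda^{k+1})$; this has order exactly $k$ provided $b_s \neq 1$, in which case $f^{s+1}(1)$ is regular at $\lambda = 0$ with value $a_{s+1} = 1/(1-b_s)$ and $d_{s+1} = 0$. To guarantee $b_s \neq 1$ I would strengthen the inductive hypothesis to track $a_s$ (for even $s$) and $b_s$ (for odd $s$) explicitly. Combining the two recurrences $b_{s+1} = -1/a_s$ and $a_{s+2} = 1/(1 - b_{s+1}) = a_s/(a_s+1)$ with the initial value $a_0 = 1$ yields the closed forms $a_{2m} = 1/(m+1)$ and $b_{2m+1} = -(m+1)$, which in particular are never equal to $1$. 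The base cases $f^0(1) = 1$ and $f^1(1) = \lambda^k/(\lambda^k - 1) = -\lambda^k(1 + \lambda^k + \lambda^{2k} + \cdots)$ initialize the induction and match these formulas.

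The main obstacle is precisely avoiding the degeneracy $b_s = 1$ in the odd-to-even step: otherwise $\lambda^k - f^s(1)$ would vanish to order strictly greater than $k$ and $f^{s+1}(1)$ would acquire a pole at $\lambda = 0$ rather than a zero, derailing both the lemma and the induction. A bookkeeping that only tracks the order $d_s$ cannot rule this out, which is why one must propagate the leading coefficients; the explicit computation above shows the odd-index leading coefficients are the negative integers $-(m+1)$, safely avoiding the cancellation.
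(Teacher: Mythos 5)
Your argument is correct, and it takes a genuinely cleaner route than the paper's. Both proofs recognize the same obstruction: tracking only $d_s$ cannot close the induction, because the odd-to-even step requires $\lambda^k - f^s(1)$ to vanish to order exactly $k$, which would fail if the $\lambda^k$-coefficient of $f^s(1)$ were $1$. Where you diverge is in what extra data gets carried along. The paper fixes a particular polynomial numerator/denominator representation of $f^s(1)$, tracks the $\lambda^k$-coefficient $\alpha_1$ of that numerator and the constant term $\beta_2$ of that denominator, and runs a two-step induction via $f\circ f$ (odd $s$ to odd $s+2$) to show $\alpha_1 \neq \beta_2$ throughout. You instead work with the Laurent expansion of $f^s(1)$ itself, which is representation-independent: you track the constant term $a_s$ for even $s$ and the leading $\lambda^k$-coefficient $b_s$ for odd $s$, alternating one step at a time. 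Your $b_s$ is precisely the ratio $\alpha_1/\beta_2$ that the paper is implicitly controlling, but your bookkeeping collapses to the transparent recursion $a_{s+2} = a_s/(a_s+1)$, giving closed forms $a_{2m} = 1/(m+1)$ and $b_{2m+1} = -(m+1)$, from which the needed nondegeneracy $b_s \neq 1$ (indeed $b_s \leq -1$) is immediate. This buys both brevity and an exact description of the leading coefficients, which is sharper information than the paper records; the paper's version produces the explicit rational expressions for $f^s(1)$ as a byproduct, but these are not needed elsewhere. (Your closed forms also feed directly into the subsequent Corollary~\ref{Cor:NuIsGood}, for which the nondegeneracy $b_s\neq 1$ is the entire content.)
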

\begin{proof}
We proceed by induction on $s$, with the base cases given by $s = 1$ and $s = 2$. The definition of $f^s(x)$ includes that $f(1) = \frac{\lambda^k}{\lambda^k-1}$, giving that $d_1 = k$. For $s = 2$, then,
$$
f^2(1) = f(f(1)) = f\left( \frac{\lambda^k}{\lambda^k-1} \right) = \frac{\lambda^k - 1}{\lambda^k - 2}.
$$
Now suppose that the result holds for some $s\geq 1$. Consider the value of $d_{s+1}$. Since composition of functions is associative, $f^{s+1}(1) = f^{s}(f(1)) = f(f^{s}(1))$. Let $q^s(x)$ denote the denominator of $f^s(x)$. Since $f(x) = \frac{\lambda^k}{\lambda^k-x}$, we can think of $f^{s+1}(1)$ as $\lambda^k q^s(1)$ divided by $\lambda^k q^s(1)$ minus the numerator of $f^s(1)$. 

If $f^{s}(1)$ is rational in $\lambda$ with $d_{s} = 0$, then the denominator of $f^{s+1}(1)$ will not be divisible by $\lambda$, but the degree of $\lambda$ in the numerator is $k$. Thus $d_{s+1} = k$.  On the other hand, if $d_{s} = k$, then $f^{s}(1)$ is rational in $\lambda$ with the power of $\lambda$ in the numerator equal to $k$. Then, $f^{s+1}(1)$ will have $k$ factors of $\lambda$ in the numerator after multiplying through by $q^s(1)$, but the denominator is the difference of two polynomials both of which have $\lambda$ occurring $k$ times as a factor. Factor out the term $\lambda^k$ from the denominator and cancel it within $f^{s+1}(1)$. This leaves zero factors of $\lambda$ in the numerator. In the denominator, we have zero factors of $\lambda$ if and only if the constant term in $q^s(1)$ differs from the coefficient of $\lambda^k$ in the numerator of $f^s(1)$. This inequality of coefficients is established by the following inductive argument, which need only handle the case of $s$ odd. In fact, we include in the inductive hypothesis as well that the numerator and denominator have no nonzero coefficients of terms of the form $\lambda^j$ with $0 < j < k$.

By definition, $f(1) = \frac{\lambda^k}{\lambda^k-1}$, so the constant term in the denominator (namely, $-1$) and the coefficient of $\lambda^k$ (namely, $1$) in the numerator differ, giving the base case. Suppose now that the result holds for some odd $i\geq 1$. Let $f^s(1)$ have numerator $\alpha(\lambda) + \alpha_1\lambda^k$ and denominator $\beta(\lambda) + \beta_1\lambda^k + \beta_2$, where $\alpha$ and $\beta$ are both polynomials of degree greater than $k$, and $\alpha_1 \neq \beta_2$. Then, we have the following. 
\begin{align*}
f^{s+2}(1) &= f\circ f\left( \frac{\alpha(\lambda) + \alpha_1 \lambda^k }{ \beta(\lambda) + \beta_1\lambda^k + \beta_2} \right) \\ 
&= f \left( \frac{\lambda^k}{\lambda^k - \frac{\alpha(\lambda) + \alpha_1\lambda^k}{\beta(\lambda) + \beta_1\lambda^k + \beta_2}} \right) \\ 
&= f \left( \frac{\beta(\lambda) + \beta_1\lambda^k + \beta_2}{\beta(\lambda) + \beta_1\lambda^k + \beta_2 - \alpha(\lambda)\lambda^{-k} - \alpha_1} \right) \\ 
&= \frac{\lambda^k}{\lambda^k - \left( \frac{\beta(\lambda) + \beta_1\lambda^k + \beta_2}{\beta(\lambda) + \beta_1\lambda^k + \beta_2 - \alpha(\lambda)\lambda^{-k} - \alpha_1} \right) } \\ 
&= \frac{\lambda^k (\beta(\lambda) + \beta_1\lambda^k + \beta_2 - \alpha(\lambda)\lambda^{-k} - \alpha_1 ) }{\lambda^k(\beta(\lambda) + \beta_1\lambda^k + \beta_2 - \alpha(\lambda)\lambda^{-k} + \alpha_1 ) - \beta(\lambda) - \beta_1\lambda^k - \beta_2} 
\end{align*}
From this, we see that the coefficient of $\lambda^k$ in the numerator is $\beta_2 - \alpha_1$, and the constant term in the denominator is $-\beta_2$. Since $\alpha_1 = 1$ and $\beta_2 = -1$ in $f(1)$, we have that the constant term in the denominator flips back and forth between $-1$ and $1$ as the powers of $f$ increase by two. On the other hand, $\beta_2 - \alpha_1$ takes values of the form $(-1)^{(s-1)/2}\cdot (s-1)/2$ for odd $s\geq 1$. Then the two desired coefficients are never equal, completing the proof.
\end{proof}

\begin{cor}\label{Cor:NuIsGood}
The multiplicity of the zero root of $\lambda^k - f^s(1)$ is the same as the multiplicity of zero in $f^s(1)$. 
\end{cor}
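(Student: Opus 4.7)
The plan is to split on the parity of $s$ and reduce each case to Lemma \ref{Lem:DegOff(1)}, with a single additional nonvanishing check needed in the odd case.

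For $s$ even (including the trivial case $s=0$, where $f^0(1)=1$), Lemma \ref{Lem:DegOff(1)} gives $d_s=0$, i.e., $f^s(1)$ is a ratio of polynomials with nonzero constant terms in both numerator and denominator, so $f^s(1)$ takes a finite, nonzero value at $\lambda=0$. Then $\lambda^k-f^s(1)$ also has nonzero value at $\lambda=0$, and its multiplicity of $\lambda=0$ as a root is $0=d_s$, matching the multiplicity of $0$ in $f^s(1)$.

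For $s$ odd, Lemma \ref{Lem:DegOff(1)} lets me write $f^s(1)=\lambda^k\,p(\lambda)/q(\lambda)$ in lowest terms with $p(0),q(0)\neq 0$, so
\begin{equation*}
\lambda^k-f^s(1)\;=\;\frac{\lambda^k\bigl(q(\lambda)-p(\lambda)\bigr)}{q(\lambda)}.
\end{equation*}
Since $q(0)\neq 0$, the order of vanishing at $\lambda=0$ of this rational function equals $k$ plus the order of vanishing of $q-p$ at $\lambda=0$. Thus the desired equality with $d_s=k$ reduces to the single nonvanishing claim $p(0)\neq q(0)$.

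This nonvanishing is the only real obstacle, and I would handle it via the clean identity $f^2(x)=(\lambda^k-x)/(\lambda^k-x-1)$, obtained by direct composition. Applying $f^2$ to $f^s(1)=\lambda^k p/q$ yields $f^{s+2}(1)=\lambda^k(q-p)\big/\bigl(\lambda^k(q-p)-q\bigr)$, so writing $a_s:=p(0)$, $b_s:=q(0)$ for odd $s$ gives the tidy recurrence
\begin{equation*}
a_{s+2}=b_s-a_s, \qquad b_{s+2}=-b_s,
\end{equation*}
with initial values $a_1=1$, $b_1=-1$ from $f(1)=\lambda^k/(\lambda^k-1)$. A one-line induction then shows $b_s\in\{\pm 1\}$ while $|a_s|$ grows linearly in $s$, so $a_s\neq b_s$ for every odd $s\geq 1$. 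This is essentially the same bookkeeping on $\beta_2-\alpha_1$ already performed inside the proof of Lemma \ref{Lem:DegOff(1)}, so it may alternatively be quoted from there rather than rederived.
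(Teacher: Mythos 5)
Your proof is correct and follows the same overall strategy as the paper: the even case is dispatched immediately since both multiplicities vanish, and the odd case reduces to the single nonvanishing check $p(0)\neq q(0)$ (equivalently, that the coefficient of $\lambda^k$ in the numerator of $f^s(1)$ differs from the constant term of its denominator). Where the paper handles that check by invoking an absolute-value bound on this ratio (and checking $s=1$ separately), you instead derive it from the clean composition identity $f^2(x)=(\lambda^k-x)/(\lambda^k-x-1)$, which packages the bookkeeping already done inside the proof of Lemma~\ref{Lem:DegOff(1)} into the two-line recurrence $a_{s+2}=b_s-a_s$, $b_{s+2}=-b_s$ and covers $s=1$ uniformly; this is a modest but genuine tidying of the argument rather than a different route.
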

\begin{proof}
The even case is trivial, because both multiplicities are zero.  In the odd case, the ratio of the coefficient of $\lambda^k$ in the numerator of $f^s(1)$ divided by the constant coefficient in the denominator has absolute value less than $1$ except when $s=1$.  However, in that case $\lambda^k - f(1) = \lambda^k - \frac{\lambda^k}{\lambda^k-1} = \frac{\lambda^{2k} - 2\lambda^k}{\lambda^k - 1}$.
\end{proof}

We now use the preceding lemma and corollary to fully describe the nullity of $P_n^k$.

\begin{theorem}
Let $k\geq 1$ and $n\geq 1$. Additionally, let $u = (k-1)^{k-1}$ and $v = k^{k-2}$. If $D_{n,k}$ denotes the multiplicity of $\lambda$ in the $k$-uniform hyperpath characteristic polynomial $\phi_{P_n^k}(\lambda)$, then 
$$
D_{n,k} = \frac{u^n\left( [nk-n+1]u^2 + [nk-2n+2]uv - [k+n-1]v^2 \right) + k(-v)^{n+2}}{(u+v)^2}_.
$$
\end{theorem}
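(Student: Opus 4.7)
The plan is to extract from Theorem~\ref{Thm:FromBFWZ} a linear recurrence for $D_{n,k}$ valid for $n \geq 2$, evaluate the inhomogeneous term in closed form using Lemma~\ref{Lem:DegOff(1)} and Corollary~\ref{Cor:NuIsGood}, and then verify the proposed formula by induction on $n$.

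First, I would take the order of vanishing at $\lambda = 0$ of both sides of the identity in Theorem~\ref{Thm:FromBFWZ}. The prefactor $\lambda^{(k-2)u^n}$ contributes $(k-2)u^n$; each factor
\[
\left(\lambda - \frac{f^{s-1}(1)}{\lambda^{k-1}}\right)^{\nu_{n,k}(s)} = \frac{(\lambda^k - f^{s-1}(1))^{\nu_{n,k}(s)}}{\lambda^{(k-1)\nu_{n,k}(s)}}
\]
contributes $\nu_{n,k}(s)(m_s - (k-1))$, where $m_s := \mathrm{ord}_{\lambda=0}(\lambda^k - f^{s-1}(1))$; and the recursive factor contributes $u D_{n-1,k}$. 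Evaluating: $m_0 = k$ since $f^{-1}(1) = 0$, and $m_1 = 0$ since $f^0(1) = 1$; for $s \geq 2$, Corollary~\ref{Cor:NuIsGood} reduces $m_s$ to the order of zero in $f^{s-1}(1)$, which by Lemma~\ref{Lem:DegOff(1)} equals $k$ for $s$ even and $0$ for $s$ odd. Setting $\epsilon_s := m_s - (k-1)$ (so $\epsilon_s = 1$ for even $s$ and $\epsilon_s = -(k-1)$ for odd $s$), we obtain
\[
D_{n,k} = u D_{n-1,k} + C_n, \qquad C_n := (k-2)u^n + \sum_{s=0}^n \epsilon_s\,\nu_{n,k}(s).
\]

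Next, I would put $C_n$ in closed form. Substituting $\nu_{n,k}(s) = v^s(u-v)u^{n-s-1}$ for $s < n$ and $\nu_{n,k}(n) = v^n$ separates the sum into an interior part $(u-v)u^{n-1}\sum_{s=0}^{n-1}\epsilon_s(v/u)^s$ and a boundary term $\epsilon_n v^n$. Writing $\epsilon_s = \tfrac{2-k}{2} + \tfrac{k}{2}(-1)^s$ decomposes the interior sum into two geometric series with ratios $v/u$ and $-v/u$; after multiplying by $(u-v)u^{n-1}$ these collapse to $u^n - v^n$ and $(u-v)(u^n - (-v)^n)/(u+v)$, respectively. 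Combining these with the matching decomposition $\epsilon_n v^n = \tfrac{2-k}{2}v^n + \tfrac{k}{2}(-v)^n$ and the prefactor $(k-2)u^n$, cancellation yields
\[
C_n = \frac{u^n[(k-1)u - v] + kv(-v)^n}{u+v}.
\]

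Finally, I would prove the claimed formula by induction on $n$. Let $\tilde D_{n,k}$ denote the proposed right-hand side, and let $P_n := [n(k-1)+1]u^2 + [n(k-2)+2]uv - (k+n-1)v^2$. For the base case $n=1$, the factorization $u^3 + u^2v - uv^2 - v^3 = (u-v)(u+v)^2$ simplifies $\tilde D_{1,k}$ to $k(u-v)$, which agrees with $D_{1,k}$: the characteristic polynomial $\phi_{P_1^k}(\lambda)$ has degree $ku$, while the nonzero spectrum of the single-edge hypermatrix $P_1^k = K_k^{(k)}$ consists of the $k$-th roots of unity each with multiplicity $v = k^{k-2}$ (see, e.g., \cite{Cooper12}), so the nullity is $ku - kv$. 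For the inductive step, multiplying $u\tilde D_{n-1,k} + C_n = \tilde D_{n,k}$ through by $(u+v)^2$ reduces to an algebraic identity: the $u^n$ coefficients balance because $P_n - P_{n-1} = (k-1)u^2 + (k-2)uv - v^2 = (u+v)[(k-1)u - v]$, and the $(-v)$-power coefficients cancel using $(-v)^{n+2} = v^2(-v)^n$ and $(-v)^{n+1} = -v(-v)^n$. The main obstacle is the geometric-sum bookkeeping in the second step --- the boundary term $\epsilon_n v^n$ must be combined with the interior sum without sign errors --- but once $C_n$ is in closed form, the induction reduces to a mechanical algebraic identity.
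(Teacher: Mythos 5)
Your proposal is correct and follows essentially the same route as the paper: extract the recurrence $D_{n,k} = uD_{n-1,k} + C_n$ from Theorem~\ref{Thm:FromBFWZ}, evaluate the inhomogeneous term via Lemma~\ref{Lem:DegOff(1)} and Corollary~\ref{Cor:NuIsGood}, and then establish the closed form. The only real differences are cosmetic: you fold the parity cases into the single expression $\epsilon_s = \tfrac{2-k}{2} + \tfrac{k}{2}(-1)^s$ rather than treating $n$ even and odd separately, and you verify the closed form by induction rather than by unrolling the recurrence into a sum; both streamline the bookkeeping but do not change the argument.
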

\begin{proof}
We first separate the $n = 1$ case. Cooper and Dutle \cite{Cooper12} showed that $D_{1,k} = k(k-1)^{k-1} - k^{k-1} = k(u-v)$. Plugging $n=1$ into the suggested formula gives the same expression, verifying the result for the base case. 
Suppose now that $n \geq 2$. From Theorem \ref{Thm:FromBFWZ}, we have 
\begin{equation} \label{eq3}
\phi_{P_n^k}(\lambda) = \lambda^{(k-2)(k-1)^{n(k-1)}} \prod_{s=0}^n \left(\lambda - \frac{f^{s-1}(1)}{\lambda^{k-1}} \right)^{\nu_{n,k}(s)} \phi_{P_{n-1}^k}(\lambda)^{(k-1)^{k-1}},
\end{equation}
so we develop a recurrence that gives $D_{n,k}$ knowing $D_{n-1,k}$. From the preceding formula, we see 
\begin{equation*}
D_{n,k} = (k-2)u^n + u\cdot D_{n-1,k} + F_{n,k},
\end{equation*}
where we define $F_{n,k}$ to be the multiplicity of the zero root in the simplified rational function $\prod_{s=0}^n \left(\lambda - \frac{f^{s-1}(1)}{\lambda^{k-1}} \right)^{\nu_{n,k}(s)}$ (taking the parameter to be negative if there are excess powers of $\lambda$ in the denominator). As above, let $d_{s}$ be the multiplicity of the zero root in $f^{s}(1)$. By Lemma \ref{Lem:DegOff(1)}, we have that $d_{s-1}$ is zero when $s-1$ is even, and $d_{s-1} = k$ when $s-1$ is odd. Since the $s$-th term of the product in (\ref{eq3}) is $[\lambda^{-(k-1)}(\lambda^k - f^{s-1}(1))]^{\nu_{n,k}(s)}$, and Corollary \ref{Cor:NuIsGood} gives that the degree of the zero root in $f^{s-1}(1)$ and $\lambda^k - f^{s-1}(1)$ are the same, we have 
$$
F_{n,k} = -(k-1)\sum_{s=0}^n \nu_{n,k}(s) + \sum_{s=0}^n \nu_{n,k}(s)\cdot d_{s-1}.
$$
We start by considering the value of the first term above. We have the following. 
\begin{align*}
\sum_{s=0}^n \nu_{n,k}(s) &= \nu_{n,k}(n) + \sum_{s=0}^{n-1} \nu_{n,k}(s) \\
&= v^n + \sum_{s=0}^{n-1} v^s (u-v)u^{n-s-1} \\
&= v^n + (u-v)u^{n-1} \frac{1 - \left( \frac{v}{u} \right)^n }{1 - \frac{v}{u}} \\
&= u^n
\end{align*}
When considering the second summand in the expression for $F_{n,k}$, we split into cases initially based on the parity of $n$. Starting with $n$ odd, we have the following simplification of $\sum_{s=0}^n \nu_{n,k}(s)\cdot d_{s-1}$: 
\begin{align*}
\sum_{s=0}^n \nu_{n,k}(s)\cdot d_{s-1} &= \sum_{s=0}^{(n-1)/2} \nu_{n,k}(2s) \cdot k \\
&= k\cdot \sum_{s=0}^{(n-1)/2} v^{2s} (u-v) u^{n-1-2s} \\
&= k(u-v)u^{n-1} \frac{1 - \left( \frac{v^2}{u^2} \right)^{(n+1)/2} } { 1 - \frac{v^2}{u^2} } \\
&=  \left( \frac{ k } { u + v } \right) (u^{n+1} - v^{n+1})
\end{align*}
On the other hand, if $n$ is even, we have the following. 
\begin{align*}
\sum_{s=0}^n \nu_{n,k}(s)\cdot d_{s-1} &= \sum_{s=0}^{n/2} \nu_{n,k}(2s) \cdot k \\
&= k\cdot v^n + k\cdot \sum_{s=0}^{(n-2)/2} v^{2s} (u-v) u^{n-1-2s} \\
&= k\cdot v^n + k(u-v)u^{n-1} \frac{1 - \left( \frac{v^2}{u^2} \right)^{(n)/2} } { 1 - \frac{v^2}{u^2} } \\
&=  \left( \frac{ k } { u + v } \right) (u^{n+1} + v^{n+1})
\end{align*}
Thus, for general $n$, we have 
$$
\sum_{s=0}^n \nu_{n,k}(s)\cdot d_{s-1} = \left( \frac{ k } { u + v } \right) (u^{n+1} - (-v)^{n+1}).
$$
This gives the following closed form for $F_{n,k}$. 
$$
F_{n,k} = -(k-1)u^n + \left( \frac{ k } { u + v } \right) (u^{n+1} - (-v)^{n+1})
$$
Substituting this back into the original expression for $D_{n,k}$, we have the following simplification. 
\begin{align*}
D_{n,k} &= (k-2)u^n + u\cdot D_{n-1,k} + F_{n,k} \\
&= (k-2)u^n + uD_{n-1,k} -(k-1)u^n + \left( \frac{ k } { u + v } \right) (u^{n+1} - (-v)^{n+1}) \\
&= uD_{n-1,k} + \frac{ u^n [(k-1) u - v] - k(-v)^{n+1} } { u + v }
\end{align*}
For $n = 1$, we noted earlier that $D_{1,k} = k(u-v)$. We continue with the following, completing the proof. 
\begin{align*}
D_{n,k} &= ku^{n-1}(u-v) + \frac{ u^n [(k-1) u - v] - k(-v)^{n+1} } { u + v } + \sum_{i=1}^{n-2} u^i \frac{ u^{n-i} [(k-1) u - v] - k(-v)^{n-i+1} } { u + v } \\
&= ku^{n-1}(u-v) + \sum_{i=0}^{n-2} u^i \frac{ u^{n-i} [(k-1) u - v] - k(-v)^{n-i+1} } { u + v } \\
&= ku^{n-1}(u-v) + \sum_{i=0}^{n-2} \frac{ u^{n} [(k-1) u - v] } { u + v } - \sum_{i=0}^{n-2} \frac{ ku^i(-v)^{n+1-i} } { u + v } \\
&= ku^{n-1}(u-v) + \frac{ u^{n} (n-1) [(k-1) u - v] } { u + v } - \frac{ k(-v)^{n+1} }{ u+v } \cdot \frac{ 1 - \left( \frac{ u } { -v } \right)^{n-1} } { 1 - \frac{u}{-v} } \\
&= \frac{ u^n ( [ nk - n + 1 ]u^2 + [nk - 2n + 2]uv - [k + n - 1] v^2 ) + k(-v)^{n+2} } { (u+v)^2 }.
\end{align*}
\end{proof}

The next result applies the above theorem to obtain an asymptotic expression for $D_{n,k}$.

\begin{cor}
Let $k\geq 3$ be fixed and $n\geq 1$. Then $\lim_{n\to\infty} \frac{D_{n,k}}{n(k-1)^{n(k-1)+1}} = 1$.  In particular, the fraction of eigenvalues of $P_n^k$ which are zero approaches $1/(k-1)$ as $n \rightarrow \infty$.
\end{cor}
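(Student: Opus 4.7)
The plan is to substitute the explicit closed form for $D_{n,k}$ from the preceding theorem and perform a careful asymptotic analysis as $n\to\infty$. Writing $u = (k-1)^{k-1}$ and $v = k^{k-2}$, I observe that $(k-1)^{n(k-1)+1} = (k-1)u^n$, so that
\[
\frac{D_{n,k}}{n(k-1)u^n} = \frac{(nk-n+1)u^2 + (nk-2n+2)uv - (k+n-1)v^2}{n(k-1)(u+v)^2} + \frac{k(-v)^{n+2}}{n(k-1)u^n(u+v)^2}.
\]

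The first step is to verify that $u > v > 0$ for every $k \geq 3$, which reduces to showing $(k-1)\ln(k-1) > (k-2)\ln k$; this can be checked directly for small $k$ (for instance, $u/v = 4/3, 27/16, 256/125$ at $k = 3, 4, 5$) and extended to all $k \geq 3$ via a monotonicity argument on $(k-1)\ln(k-1) - (k-2)\ln k$. Since $v/u < 1$, the second summand above is $O\bigl((v/u)^n/n\bigr) \to 0$ and contributes nothing in the limit. Expanding the numerator of the first summand as a polynomial in $n$ gives
\[
(nk-n+1)u^2 + (nk-2n+2)uv - (k+n-1)v^2 = n\bigl[(k-1)u^2 + (k-2)uv - v^2\bigr] + \bigl[u^2 + 2uv - (k-1)v^2\bigr],
\]
so that after division by $n$ only the coefficient of $n$ survives in the limit. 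The crucial algebraic step is the factorization
\[
(k-1)u^2 + (k-2)uv - v^2 = \bigl((k-1)u - v\bigr)(u + v),
\]
which cancels one copy of $u+v$ and reduces the limit to $\bigl((k-1)u - v\bigr)/\bigl((k-1)(u+v)\bigr)$. The final step is to substitute the combinatorial expressions $u = (k-1)^{k-1}$ and $v = k^{k-2}$ and confirm this simplifies to the claimed constant $1$.

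For the ``in particular'' conclusion, note that $\phi_{P_n^k}$ has degree $\bigl((k-1)n+1\bigr)(k-1)^{(k-1)n}$, which is asymptotic to $n(k-1)^{(k-1)n+1}$; hence the fraction of zero eigenvalues has the same limit as $D_{n,k}/[n(k-1)^{(k-1)n+1}]$, scaled by $1/(k-1)$ to account for the factor of $k-1$ in the leading-order vertex count. The main obstacle is the final algebraic identification: reducing the dominant coefficient $\bigl((k-1)u - v\bigr)/\bigl((k-1)(u+v)\bigr)$ to the claimed value depends on the specific combinatorial forms of $u$ and $v$ and cannot be bypassed by the general asymptotic reasoning; every other step is routine expansion and bounding.
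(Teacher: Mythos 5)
Your algebraic work is sound through the identification of the dominant coefficient $\frac{(k-1)u - v}{(k-1)(u+v)}$, and your factorization $(k-1)u^2 + (k-2)uv - v^2 = \bigl((k-1)u - v\bigr)(u+v)$ is correct. But the final step you defer --- confirming that this quantity equals $1$ once $u = (k-1)^{k-1}$ and $v = k^{k-2}$ are substituted --- cannot be carried out: the fraction $\frac{(k-1)u-v}{(k-1)(u+v)}$ equals $1$ only if $-v = (k-1)v$, i.e.~$k = 0$. For $k = 3$ one has $u = 4$, $v = 3$, and the coefficient is $5/14 \approx 0.357$. This is borne out numerically by the table of $D_{n,3}$ values in the paper's last section: $D_{10,3}/(10 \cdot 2^{21}) \approx 0.381$ and $D_{11,3}/(11 \cdot 2^{23}) \approx 0.376$, a sequence decreasing toward $5/14$, not toward $1$.

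What your careful expansion actually exposes is that the paper's own argument for this corollary --- which replaces the rational expression by ``a ratio of two polynomials just in the variable $u$'' on the grounds that $u > v$ --- does not justify the claimed limit. In the limit $n \to \infty$ with $k$ fixed, one may indeed discard the term $k(-v)^{n+2}$ relative to $u^n$, but the multiplier of $u^n$ involves $u$ and $v$ as fixed constants of comparable size, and the $uv$ and $v^2$ contributions to the coefficient of $n$ are not negligible. The honest asymptotic is $D_{n,k} \sim \frac{(k-1)u - v}{u + v}\, n\, u^n$, so that $D_{n,k}/\bigl(n(k-1)^{n(k-1)+1}\bigr) \to \frac{(k-1)u-v}{(k-1)(u+v)} < 1$. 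The gap in your proposal is therefore not a routine verification waiting to be done; it is precisely the point at which both your argument and the stated corollary fail, and a corresponding correction propagates to the ``in particular'' clause as well.
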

\begin{proof}
We have the following expression for $D_{n,k}$, where $u = (k-1)^{k-1}$ and $v = k^{k-2}$:
$$
D_{n,k} = \frac{u^n\left( [nk-n+1]u^2 + [nk-2n+2]uv - [k+n-1]v^2 \right) + k(-v)^{n+2}}{(u+v)^2}
$$
Noting that $k \geq 2$, we first show that $u > v$. We have the following computation. 
$$
\frac{u}{v} = \frac{(k-1)^{k-1}}{k^{k-2}} = \frac{k^2}{k-1} \left( 1 - \frac{1}{k} \right)^k \geq \frac{k^2}{k-1} \cdot \frac14 = \frac{k^2}{4k-4}.
$$
Note that for $k \geq 2$, the function $\left( \frac{k-1}{k} \right)^k$ is increasing, so its value for any $k\geq 2$ is bounded below by its value when $k = 2$, namely, $1/4$. Furthermore, the rightmost expression is greater than one if and only if $k^2 \geq 4k - 4$, which is true because $(k-2)^2 \geq 0$. Therefore, $u > v$, so $u$ dominates $v$ asymptotically. Then the rational expression is asymptotically the same as a ratio of two polynomials just in the variable $u$, from which it follows that 
$$
\lim_{n\to\infty} \frac{D_{n,k}}{[n(k-1) + 1]u^n} = 1.
$$
Since $k$ is constant, this gives the desired result.  The second claim in the proof follows because (see \cite{Qi05}) the total number of eigenvalues (counted with algebraic multiplicity) is $N (k-1)^{N}$, where $N$ is the number of vertices; in this case, $N = n(k-1)+1$ and, as $n \rightarrow \infty$,
$$
\frac{n(k-1)^{n(k-1)+1}}{(n(k-1)+1) (k-1)^{n(k-1)+1}} \sim \frac{n(k-1)^{n(k-1)+1}}{n(k-1)^{n(k-1)+2}} = \frac{1}{k-1}.
$$
\end{proof}

From this, we observe the following lower bound for $D_{n,3}$ when $n\geq 12$. 
$$
D_{n,3} \geq \frac{4^n}{7} (5n+3)
$$

\section{Conjecture Verification}

\begin{theorem}
Let $V_0^1,\dotsc, V_0^\kappa$ denote the irreducible components of $V_0$ for $P_n^3$. For $n \geq 1$, $D_{n,3} \geq \sum_{i=1}^\kappa \dim(V_0^i)(2)^{\dim(V_0^i) - 1}$. 
\end{theorem}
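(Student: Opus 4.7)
The plan is to verify this inequality by handling small $n$ directly and then using an asymptotic growth-rate comparison for large $n$. For $n = 1$, Proposition~\ref{prop:oneedge} gives $\gm(0) = 3$, and the closed form from Section~\ref{Sec:AlgMult} yields $D_{1,3} = k(u-v) = 3$, so equality holds. For $n = 2$, Proposition~\ref{prop:twoedge} gives $\gm(0) = 13$, while the closed form yields $D_{2,3} = (16 \cdot 92 + 27 \cdot 9)/49 = 35 \geq 13$. These dispose of the base cases.

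For $n \geq 3$, my plan is to combine the explicit expression
\[
D_{n,3} \;=\; \frac{4^n(35n+22) + 27(-3)^n}{49} \;=\; \Theta(n \cdot 4^n)
\]
(obtained by specializing the theorem of Section~\ref{Sec:AlgMult} to $k = 3$, $u = 4$, $v = 3$) with an upper bound on the right-hand side derived from the enumeration in Theorem~\ref{Thm:IrredComps}. Each irreducible component of $V_0$ corresponds to a pair $(S, B)$ with $S \in \hat{\mathcal{F}}_n$ and $B$ an $S$-admissible set; since $\mu_n(S) \leq 4$, and the complements in $A_n'$ of elements of $\mathcal{F}_n$ are the independent sets of a path on $n-1$ vertices, we have $|\hat{\mathcal{F}}_n| \leq F_{n+1}$. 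Combining this with the dimension bound $\dim(V_0^i) \leq 2\lfloor n/2 \rfloor + 1 \leq n + 1$ from Corollary~\ref{Cor:GeoMult} gives
\[
\sum_{i=1}^{\kappa} \dim(V_0^i) \, 2^{\dim(V_0^i) - 1} \;\leq\; 4 F_{n+1} (n+1)\, 2^n \;=\; O\bigl(n \,(2\phi)^n\bigr),
\]
where $\phi = (1 + \sqrt{5})/2$.

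Since $2\phi \approx 3.236 < 4$, the ratio of $D_{n,3}$ to the right-hand side tends to infinity, so the inequality holds for all $n$ beyond some threshold $N_0$. It then remains to verify the range $3 \leq n \leq N_0$, which I would do by computing coefficients of $H'(z)$ (the generating function for $\gm(0)$ produced in Section~\ref{Sec:GeoMult}) via its rational closed form and comparing with $D_{n,3}$ term by term; since the elementary bound $D_{n,3} \geq 4^n(5n+3)/7$ is already valid for $n \geq 12$, $N_0 = 12$ should suffice. The main obstacle is simply the bookkeeping for this finite verification, but spot checks---$D_{3,3} = 151$ vs.\ coefficient $72$, $D_{4,3} = 891$ vs.\ $140$, $D_{5,3} = 3983$ vs.\ $812$---show substantial slack already at $n = 3$, so the finite range should be straightforward to clear.
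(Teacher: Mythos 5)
Your proposal is correct and follows essentially the same route as the paper: bound $D_{n,3}$ below by $\Theta(n\cdot 4^n)$ via the closed form, bound the geometric side above by $O(n(2\phi)^n)$ using the Fibonacci count of $\hat{\cF}_n$, the dimension bound of Corollary~\ref{Cor:GeoMult}, and $\mu_n(S)\leq 4$, note $2\phi < 4$, and clear the finite range $n\leq 11$ by direct computation from $H'(z)$. The paper simply tabulates those eleven values where you leave the finite verification as a remark, but the structure, the constants, and the threshold $n\geq 12$ (coming from the elementary bound $D_{n,3}\geq 4^n(5n+3)/7$) all coincide.
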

\begin{proof}
Recall the following bounds on $D_{n,3}$ and $\eta_n$, where $\eta_n$ is the $z^n$ coefficient of the generating function $H(z)$ found in Section \ref{Sec:GeoMult}. 
\begin{align*}
D_{n,3} & \geq \frac{4^n}{7} (5n+3) \\
\eta_n &\leq \frac{\phi^n - (-\phi)^{-n}}{\sqrt{5}} \cdot (n+1)\cdot 2^n
\end{align*}
It is easy to check that $4(\phi^n + 1) \leq 2^n$ for any $n\geq 7$. Furthermore, 
$$
2^n \leq \frac{4\sqrt{5}}{7} \cdot 2^n \leq \frac{\sqrt{5}}{7} \cdot 2^n\cdot \frac{5n+3}{n+1}
$$
$$
4(\phi^n + 1) \geq 4(\phi^n - (-\phi)^{-n})
$$
Combining the inequalities shows that $D_{n,3} \geq 4\eta_n$ for $n\geq 12$:
\begin{align*}
4(\phi^n - (-\phi)^{-n}) &\leq \frac{\sqrt{5}}{7} \cdot 2^n\cdot \frac{5n+3}{n+1} \\
4\cdot \frac{\phi^n - (-\phi)^{-n}}{\sqrt{5}} \cdot (n+1) &\leq \frac{2^n}{7} \cdot (5n+3) \\
4\cdot \frac{\phi^n - (-\phi)^{-n}}{\sqrt{5}} \cdot (n+1)\cdot 2^n &\leq \frac{4^n}{7} \cdot (5n+3)
\end{align*}
Therefore, this gives us that the conjecture holds for $n\geq 12$, since $\gm(0) \leq 4\eta_n$. The following table computes values for $n < 12$ exactly, completing the proof. 
\begin{center}
\begin{tabular}{ c | c c c c c c c c c c c }
 n & $1$ & $2$ & $3$ & $4$ & $5$ & $6$ & $7$ & $8$ & $9$ & $10$ & $11$ \\
 \hline 
 $D_{n,3}$ & $3$ & $35$ & $151$ & $891$ & $3983$ & $19795$ & $88071$ & $407531$ & $1792063$ & $7993155$ & $34740791$ \\ 
 \hline 
 $\gm(0)$ & $3$ & $13$ & $72$ & $140$ & $812$ & $1648$ & $7280$ & $18064$ & $60928$ & $176576$ & $509376$
\end{tabular}
\end{center}
\end{proof}

\section{Conclusion}

We have shown how to compute the multiplicities of the zero eigenvalue of linear hyperpaths of rank $3$ and the dimensions of the irreducible components of the corresponding nullvarieties.  This enables us to compute $\gm(0)$ and $\am(0)$ so that they can be compared in order to verify the Hu-Ye conjecture in this special case.  The above analysis can be extended by straightforward generalization to higher rank hyperpaths.  Furthermore, some of the issues encountered in carrying out this analysis invite new questions:

\begin{enumerate}
    \item In general, linear hypertrees have many of the properties taken advantage of above for hyperpaths.  Therefore, we ask: can these methods be used to answer Conjecture \ref{conj:HuYe} for this much more general class of hypergraphs?
    \item The set of vertices/coordinates where hypergraphs' nullvectors are zero is combinatorially interesting and plays an integral role in our classification of irreducible components.  For example, it is not hard to see that these vertex sets are transversals of the hypergraph's edge set when it is any hypertree.  What is possible to say in general about these sets and their relation to the nullvariety's components?
    \item We have made no attempt to understand eigenvarieties corresponding to nonzero eigenvalues $\lambda$, nor have we attempted to compute their algebraic multiplicities.  Doing so would require answering: what are the rest of the root multiplicities of the characteristic polynomials of linear hyperpaths, and how does the structure of non-zero eigenvarieties differ from the nullvariety?
    \item Is Conjecture \ref{conj:HuYe} tight?  The quantity proposed for $\gm(\lambda)$ is perhaps not the maximum function of the multiset of eigenvariety component dimensions which still provides a lower bound on $\am(\lambda)$.
\end{enumerate}

\section{Acknowledgement}

The authors wish to thank Fan Chung for so much of the inspiration that paved the road to this subject, and to the organizers of the December 2019 TSIMF conference for creating the occasion to honor her and continue her work.

\bibliographystyle{plain}
\bibliography{refs}

\Addresses

\end{document}